\def\UrlSpecials{\do\~{\kern -.15em\lower .7ex\hbox{~}\kern .04em}} \catcode`~=13 
\newcommand{\tnorm}[1]{{\left\vert\kern-0.25ex\left\vert\kern-0.25ex\left\vert #1 
    \right\vert\kern-0.25ex\right\vert\kern-0.25ex\right\vert}}
\newcommand{\tnormt}[1]{{\vert\kern-0.25ex\vert\kern-0.25ex\vert #1 
    \vert\kern-0.25ex\vert\kern-0.25ex\vert}}
\newcommand{\normt}[1]{\Vert#1\Vert}
\newcommand{\abst}[1]{\vert#1\vert}
\newcommand{\abs}[1]{\left\lvert#1\right\rvert}
\newcommand{\nn}{\nonumber}
\newcommand{\eqcst}{\stackrel{{\rm c}}{=}}
\newcommand{\nt}{\addtocounter{equation}{1}\tag{\theequation}} 
\newcommand{\dom}{\mathsf{dom}\,}
\newcommand{\inter}{\mathsf{int}\,}
\newcommand{\bdry}{\mathsf{bd}\,}
\newcommand{\diag}{\mathsf{diag}\,}
\newcommand{\cl}{\mathsf{cl}\,}
\newcommand{\ri}{\mathsf{ri}\,}
\newcommand{\relint}{\mathsf{ri}\,}
\newcommand{\cone}{\mathrm{cone}\,}
\newcommand{\ipt}{\lranglet}
\newcommand{\calB}{\mathcal{B}}
\newcommand{\calD}{\mathcal{D}}
\newcommand{\calE}{\mathcal{E}}
\newcommand{\calG}{\mathcal{G}}
\newcommand{\calI}{\mathcal{I}}
\newcommand{\calJ}{\mathcal{J}}
\newcommand{\calK}{\mathcal{K}}
\newcommand{\calP}{\mathcal{P}}
\newcommand{\calQ}{\mathcal{Q}}
\newcommand{\calS}{\mathcal{S}}
\newcommand{\calV}{\mathcal{V}}
\newcommand{\calX}{\mathcal{X}}
\newcommand{\tilcalE}{\widetilde{\calE}} 
\newcommand{\tilcalG}{\widetilde{\calG}}
\newcommand{\rmh}{\mathrm{h}}
\newcommand{\rmv}{\mathrm{v}}
\newcommand{\bbC}{\mathbb{C}}
\newcommand{\bbE}{\mathbb{E}}
\newcommand{\bbH}{\mathbb{H}}
\newcommand{\bbR}{\mathbb{R}}
\newcommand{\bbS}{\mathbb{S}}
\DeclareMathAlphabet{\mathbsf}{OT1}{cmss}{bx}{n}
\newcommand{\rvA}{\mathsf{A}}
\newcommand{\rvI}{\mathsf{I}}
\newcommand{\tilG}{\widetilde{G}}
\newcommand{\hatP}{\widehat{P}}
\newcommand{\tily}{\widetilde{y}}
\newcommand{\tilY}{\widetilde{Y}}
\newcommand{\barf}{\bar{f}}
\newcommand{\barg}{\bar{g}}
\newcommand{\barh}{\bar{h}}
\newcommand{\barp}{\bar{p}}
\newcommand{\barx}{\bar{x}}
\newcommand{\barD}{\bar{D}}
\newcommand{\barF}{\bar{F}}
\newcommand{\barG}{\bar{G}}
\newcommand{\barH}{\bar{H}}
\newcommand{\barL}{\bar{L}}
\newcommand{\barY}{\bar{Y}}
\newcommand{\iid}{i.i.d.\ }
\newcommand{\ceil}[1]{\lceil{#1}\rceil}
\newcommand{\floor}[1]{\lfloor{#1}\rfloor}
\newcommand{\lranglet}[2]{\langle{#1},{#2}\rangle}
\newcommand{\eqa}{\stackrel{\rm(a)}{=}}
\newcommand{\eqb}{\stackrel{\rm(b)}{=}}
\DeclareMathOperator*{\argmax}{arg\,max}
\DeclareMathOperator*{\argmin}{arg\,min}
\DeclareMathOperator{\minimize}{minimize}
\DeclareMathOperator{\st}{s.t.}
\DeclareMathOperator{\tr}{tr}
\DeclareMathOperator{\rank}{rank}
\newtheorem{theorem}{Theorem} 
\newtheorem*{theorem*}{Theorem}
\newtheorem{lemma}{Lemma}
\newtheorem{prop}{Proposition}
\newtheorem{corollary}{Corollary}
\newtheorem*{assump*}{Assumption}
\theoremstyle{remark}
\newtheorem{remark}{Remark}
\newcommand{\qednew}{\nobreak \ifvmode \relax \else
      \ifdim\lastskip<1.5em \hskip-\lastskip
      \hskip1.5em plus0em minus0.5em \fi \nobreak
      \vrule height0.75em width0.5em depth0.25em\fi}
\numberwithin{equation}{section}
\numberwithin{lemma}{section}
\numberwithin{theorem}{section}
\numberwithin{prop}{section}
\numberwithin{remark}{section}
\newcommand{\poi}{(P)}
\newcommand{\doi}{(D)}
\newcommand{\EE}{\mathbb{E}}
\newcommand{\rh}{R_h}
\newcommand{\TV}{{\rm TV}}
\begin{document}

\title{Analysis of the Frank-Wolfe Method for Convex Composite Optimization involving a Logarithmically-Homogeneous Barrier\thanks{Research
supported by AFOSR Grant No. FA9550-19-1-0240.
}
}


\author{Renbo Zhao\thanks{MIT Operations Research Center, 77 Massachusetts Avenue, Cambridge, MA   02139 ({mailto:  renboz@mit.edu}).}         \and
        Robert M. Freund\thanks{MIT Sloan School of Management, 77 Massachusetts Avenue, Cambridge, MA   02139 ({mailto:  rfreund@mit.edu}).} 
}


%

\maketitle

\begin{abstract} 

{We present and analyze a new generalized Frank-Wolfe method for the composite optimization problem $(P):  {\min}_{x\in\bbR^n} \; f(\mathsf{A} x) + h(x)$, where $f$ is a $\theta$-logarithmically-homogeneous self-concordant barrier, $\rvA$ is a linear operator and the function $h$ has bounded domain but is possibly non-smooth.  We show that our generalized Frank-Wolfe method requires $O((\delta_0 + \theta + R_h)\ln(\delta_0) + (\theta + R_h)^2/\varepsilon)$ iterations to produce an $\varepsilon$-approximate solution, where $\delta_0$ denotes the initial optimality gap and $R_h$ is the variation of $h$ on its domain. This result establishes certain intrinsic connections between $\theta$-logarithmically homogeneous barriers and the Frank-Wolfe method. When specialized to the $D$-optimal design problem, we essentially recover the complexity obtained by Khachiyan~\cite{khachiyan1996rounding} using  the Frank-Wolfe method with exact line-search.  We also study the (Fenchel) dual problem of $(P)$, and we show that our new method is equivalent to an adaptive-step-size mirror descent method applied to the dual problem. This enables us to provide iteration complexity bounds for the mirror descent method despite that the dual objective function is non-Lipschitz  and has unbounded domain. In addition, we present computational experiments that point to the potential usefulness of our generalized Frank-Wolfe method on Poisson image de-blurring problems with TV regularization, and on simulated PET problem instances.}    

\end{abstract}

\noindent {\bf Keywords:} Frank-Wolfe method, composite optimization, logarithmic-homogeneity, self-concordance, barrier, complexity analysis.\vspace{0.2in}

\section{Introduction}
\label{intro}

%


We present and analyze a new generalized Frank-Wolfe method~\cite{Frank_56,dem1967minimization,Lev_66,Canon_68,Dunn1978,Dunn1979,Dunn1980} for the following composite optimization problem:
\begin{equation}
(P): \ \ \ F^*:= {\min}_{x\in\bbR^n} \;[F(x):= f(\rvA x) + h(x)] \label{poi}
\end{equation}
where $f: \bbR^m\to\bbR\cup\{+\infty\}$ is an extended real-valued convex function that is differentiable on its domain $\dom f:=\{u \in \mathbb{R}^m : f(u) < +\infty \}$, $\rvA:\bbR^{n}\to \bbR^{m}$ is a linear operator (though not necessarily invertible or surjective) and the function $h:\bbR^n\to\bbR\cup\{+\infty\}$ is proper, closed and convex (but possibly non-smooth), for which $\dom h$ is a nonempty compact convex set.  Furthermore, and in contrast to the standard setting where $f$ is assumed to be $L$-smooth on $\dom F$ (i.e., its gradient is $L$-Lipschitz on $\dom F$), our focus is on the setting where $f$ belongs to a particularly special and important class of functions that arise in practice, namely $\theta$-logarithmically-homogeneous self-concordant barrier functions (whose definition and properties will be reviewed below).  For convenience we distinguish between $u \mapsto f(u)$ and $x \mapsto f(\rvA x)$ by defining 
\begin{equation}\label{stones} \bar{f}(x):=f(\rvA x) \ . \end{equation}

The Frank-Wolfe method was developed in 1956 in the seminal paper of Frank and Wolfe~\cite{Frank_56} for the case $h = \iota_{\calX}$, where $\iota_{\calX}$ denotes the indicator function of $\calX$ (i.e., $\iota_{\calX}(x) = 0$ for $x\in\calX$ and $+\infty$ otherwise), and $\calX$ is a (bounded) polytope. (In particular, $\poi$ then is the constrained problem $\min_{x\in\calX}\, \bar f(x)$.) The Frank-Wolfe method was a significant focus of research up through approximately 1980, during which time it was generalized to handle more general compact sets $\calX$, see e.g., \cite{dem1967minimization,Dunn1978,Dunn1979,Dunn1980}. Each iteration of the Frank-Wolfe method computes the minimizer of the first-order (gradient) approximation of $\bar{f}(x)$ on $\calX$, and constructs the next iterate by moving towards the minimizer.  Just in the last decade, and due to the importance of optimization in modern computational statistics and machine learning, the Frank-Wolfe method has again attracted significant interest (see \cite{Jaggi_13,Harcha_15,Freund_16,Freund_17,Nest_18,Ghad_19} among many others), for at least two reasons. First, in many modern application settings, $\calX$ is a computationally ``simple'' set for which the linear optimization sub-problem in the Frank-Wolfe method is easier to solve compared to the standard projection sub-problems required by other first-order methods, see e.g.,~\cite{Nest_13}. Secondly, the Frank-Wolfe method naturally produces ``structured'' (such as sparse, or low-rank) solutions in several important settings, which is very useful in the high-dimensional regime in machine learning. This is because each iterate of the Frank-Wolfe method is a convex combination of all the previous linear optimization sub-problem solutions, and hence if the extreme points of $\calX$ are unit coordinate vectors (for example), then the $k$-th iterate will have at most $k$ non-zeroes. This statement can be made more precise in specific settings, and also generalizes to the matrix setting where $\calX$ is the nuclear norm ball~\cite{Harcha_15} --- in this setting the $k$-th (matrix) iterate will have rank at most $k$.

More recently, the Frank-Wolfe method has been generalized to the composite setting, where the function $h$ is a general convex non-smooth function with compact domain $\calX$, see e.g.,~\cite{Bach_15,Nest_18,Ghad_19}. In this generalized framework, the sub-problem solved at iteration $k$ is 
\begin{equation}\label{subcbday}
\minimize_{x \in \bbR^n}\; \langle \nabla \bar{f}(x^k), x \rangle + h(x) \ ,
\end{equation} which specializes to the standard Frank-Wolfe sub-problem in the case when $h = \iota_\calX$.
In certain situations, this minimization problem admits (relatively) easily computable solutions despite the presence of the non-smooth function $h$. For example, if $h= \barh + \iota_\calP$, where $\barh$ is a polyhedral function and $\calP$ is a polytope, then \eqref{subcbday} can be reformulated as a linear optimization problem (LP), which can be solved efficiently if it has moderate size or a special structure, e.g., network flow structure~\cite{Harcha_15}. For more such examples we refer the reader to~\cite{Nest_18}.  

In addition, there has been recent research work on using the Frank-Wolfe method to solve the projection sub-problems (which are constrained quadratic problems) that arise in various optimization algorithms. For example,  \cite{Liu_20} presents a  projected Newton method for solving a class of problems that is somewhat different from (but related to) \eqref{poi}; specifically \cite{Liu_20} assumes that the linear operator $\rvA$ is invertible and the function $f$ is self-concordant but is not necessarily a logarithmically-homogeneous barrier. The Frank-Wolfe method is used therein to solve each projection sub-problem in the projected Newton method, and \cite{Liu_20} shows that the total number of linear minimization sub-problems needed is $O(\varepsilon^{-(1+o(1))})$.  Another such example is in \cite[Section~5]{Doikov_20}, which develops an affine-invariant trust-region type of method for solving a class of convex composite optimization problems in a similar form as~\eqref{poi}, with the key difference being that in \cite{Doikov_20} $f$ is assumed to be twice differentiable with Lipschitz Hessian on $\dom h$. The Frank-Wolfe method is used in \cite{Doikov_20} to solve each projection sub-problem, wherein it is shown that the total number of linear minimization sub-problems  needed is $O(\varepsilon^{-1})$.

When analyzing the convergence of the standard or generalized Frank-Wolfe method, almost all such analyses rely on the $L$-smooth assumption of the function $f$. 
Perhaps accidentally, the first specific attempt to extend the Frank-Wolfe method and analysis beyond the case of $L$-smooth functions is due to Khachiyan \cite{khachiyan1996rounding}.  In the specific case of the $D$-optimal design problem \cite{Fedorov_72}, Khachiyan \cite{khachiyan1996rounding} developed a ``barycentric coordinate descent'' method with an elegant computational complexity analysis, and it turns out that this method is none other than the Frank-Wolfe method with exact line-search~\cite{sun2004computation,alperman}. Khachiyan's proof of his complexity result (essentially $O(n^2/\varepsilon)$ iterations) used clever arguments that do not easily carry over elsewhere, and hence begged the question of whether or not any of the arguments in \cite{khachiyan1996rounding} might underly any general themes beyond $D$-optimal design, and if so what might be the mathematical structures driving any such themes?  In this work, we provide affirmative answers to these questions, by considering the $D$-optimal design problem as a special instance of the broader class of composite optimization problem $(P)$.  The second attempt was the recent paper of Dvurechensky et al.\ \cite{Dvu_20}, which presented and analyzed an adaptive step-size Frank-Wolfe method for tackling the problem $\min_{x \in \calX} \bar {f}(x)$ where $\bar f$ is assumed to be a non-degenerate (i.e., with positive-definite Hessians) self-concordant function and $\calX$ is a compact convex set, and was the first paper to study the Frank-Wolfe method for these special functions. 
The set-up in \cite{Dvu_20} can be seen as an instance of $\poi$ with $h$ being the indicator function of $\calX$, namely, $h=\iota_\calX$, and the additional assumption that $\bar f$ is non-degenerate, which we do not need.  (Note that in our setting, this amounts to assuming that $\rvA = \rvI$, namely the identity operator or that the linear operator $\rvA$ is invertible.) 
However, unlike \cite{Dvu_20}, we additionally assume that $f$ is $\theta$-logarithmically homogeneous.  As our analysis will show, this last property --- which holds true for all applications that we are aware of --- is the key property that leads to relatively simple and natural computational guarantees for the Frank-Wolfe method in this expanded relevant setting.

Let us now review the formal definition of a $\theta$-logarithmically-homogeneous self-concordant barrier function. Let $\calK\subsetneqq \bbR^m$ be a regular cone, i.e., $\calK$ is closed, convex, pointed ($\calK$ contains no line), and has nonempty interior ($\inter\calK\ne \emptyset$).  We say that $f$ is a $\theta$-logarithmically-homogeneous (non-degenerate) self-concordant barrier on $\calK$ for some $\theta \ge 1$ and we write ``$f \in \calB_\theta(\calK)$'', if $f$ is three-times differentiable and strictly convex on $\inter \calK$ and satisfies the following three properties: 
\begin{enumerate}[label=(P\arabic*),leftmargin=3.9em]
\item $\abst{D^3f(u)[w,w,w]}\le 2(\lranglet{H(u)w}{w})^{3/2}$ \ $\forall\,u\in\inter\calK$, $\forall\,w\in\bbR^m$,\label{item:third_second_bounded}
\item $f(u_k)\to  \infty$ for any $\{u_k\}_{k\ge 1}\subseteq\inter\calK$ such that $u_k\to u\in\bdry\calK$, and\label{item:boundary_growth}
\item $f(tu) = f(u) - \theta\ln (t)$ \  $\forall\,u\in\inter\calK$, $\forall\,t>0$ , \label{item:log_homogeneous}
\end{enumerate}
where $H(u)$ denotes the Hessian of $f$ at $u\in\inter\calK$. For details on these properties, we refer readers to Nesterov and Nemirovski~\cite[Section 2.3.3]{Nest_94} and Renegar \cite[Section 2.3.5]{Renegar_01}.  
 Properties (P1) and (P2) correspond to $f$ being a (standard, strongly) self-concordant function on $\inter\calK$ (cf.~\cite[Remark~2.1.1]{Nest_94}), and property (P3) corresponds to $f$ being a $\theta$-logarithmically-homogeneous barrier function on $\calK$.  Here $\theta$ is called the {\em complexity parameter} of $f$ in the terminology of Renegar~\cite{Renegar_01}.  The two prototypical examples of such functions are (i) $-\ln\det(U)$ for $U \in \calK := \mathbb{S}_{+}^{k}$ and $\theta = k$, and (ii) $-\sum_{j=1}^m w_j \ln(u_j)$ for $u \in \calK := \mathbb{R}_{+}^{m}$ and $\theta = \sum_{j=1}^m w_j$ where $w_1, \ldots, w_n \ge 1$, see \cite{Nest_94, Renegar_01}.  

We now present some application examples of $\poi$ where $f \in \calB_\theta(\calK)$, including the aforementioned $D$-optimal design problem.

\subsection{Applications}\label{sec:applications}

\vspace{2ex}
\noindent {\em 1.\ Poisson image de-blurring with total variation (TV) regularization} \cite{Harmany_12,Dey_06,Chambolle_18}. 
Let the $m\times n$ matrix $X$ be the true representation of an image, such that each entry $X_{ij}\ge 0$ represents the intensity of the pixel at location $(i,j)\in[m]\times[n]$, 
and $X_{ij}\in\{0,1,\ldots,M\}$, where $M:=2^b-1$ for $b$-bit images. 
In many applications, ranging from microscopy to astronomy, we observe a blurred image contaminated by Poisson noise, which we denote by $Y$,  and we wish to estimate the true image $X$ from $Y$. 
The generative model of $Y$ from $X$ is presumed to be as follows. Let $\rvA:\bbR^{m\times n}\to \bbR^{m\times n}$ denote the 2D discrete convolutional (linear) operator with periodic boundary conditions, which is assumed to be known. This convolutional operator is defined by a $p\times p$ 2D convolutional kernel with a size 
$q:=p^2$ that is typically much smaller than the size of image $N:= mn$. (For an illustration of the 2D convolution, see~\cite{Bhar_18} for example.) The blurred image $\tilY$ is obtained by passing $X$ through $\rvA$, i.e., $\tilY := \rvA(X)$, and the observed image $Y$ results from adding independent entry-wise Poisson noise to $\tilY$, i.e., $ Y_{ij}\sim {\sf Poiss}(\tilY_{ij})$, for all  $(i,j)\in[m]\times[n]$, and 
$\{Y_{ij}\}_{(i,j)\in[m]\times [n]}$ are assumed to be independent. 

It will be preferable to work with vectors in addition to matrices, whereby we equivalently describe the above model using vector notation as follows. We denote $X  = [x_1 \;\cdots\; x_m]^\top$, where $x_i^\top$ denotes the $i$-th row of $X$, and let ${\sf vec}:\bbR^{m\times n}\to \bbR^{mn}$ denote the vectorization operator that sequentially concatenates $X$ into the column vector $ {\sf vec}(X) := x :=[x_1^\top \cdots~ x_m^\top]^\top$, and let ${\sf mat}(x)$ denote the inverse operator of ${\sf vec}$, so that ${\sf mat}(x) = X$. Define $y := {\sf vec}(Y)$ and $\tily := {\sf vec}(\tilY)$.  
In addition, we represent $\rvA$ in its matrix form $A\in\bbR^{N\times N}$ (recall $N := m  n$), such that 
$\tily := A x$. Furthermore, let us represent $A:= [a_1\;\ldots\;a_N]^\top$, where $a_l^\top$ denotes the $l$-th row of $A$ for $l \in [N]$.  Note that $A$ is a sparse doubly-block-circulant matrix, such that each row $a_l^\top$ of $A$ has at most $q$ non-zeros, where $q\ll N$ denotes the size of the 2D convolution kernel. 
Finally, we have $y_{l}\sim {\sf Poiss}(\tily_{l})$ for all  $l\in[N]$, and $\{y_l\}_{l\in[N]}$ are independent.  

 The maximum likelihood (ML) estimator of $X$ from the observed image $Y$ is the optimal solution of the following optimization problem:
\begin{align}
{\min}_{x\in\bbR^N}&\;\;  -\textstyle\sum_{l=1}^{N} y_l\ln(a_l^\top x) + (\sum_{l=1}^{N} a_l)^\top x \quad 
\st\;\; 0\le  x\le Me \ ,   \label{eq:deblurring}
\end{align} 
where $e$ denotes the vector with all entries equal to one. 
In addition, following~\cite{Rudin_92}, in order to recover a smooth image with sharp edges, 
we add Total Variation (TV) regularization to the objective function in~\eqref{eq:deblurring}, which yields the following regularized ML estimation problem:
\begin{align}
{\min}_{x\in\bbR^N}&\;\; \barF(x):= -\textstyle\sum_{l=1}^{N} y_l\ln(a_l^\top x) + (\sum_{l=1}^{N} a_l)^\top x + \lambda {\rm TV}(x)\quad \st\;\; 0\le  x\le Me \ , 
\label{eq:deblurring_TV}
\end{align} 
where 
\begin{align*}{\rm TV}(x)&:= \textstyle\sum_{i=1}^m \sum_{j=1}^{n-1} \abs{[{\sf mat}(x)]_{i,j} - [{\sf mat}(x)]_{i,j+1}} + \textstyle\sum_{i=1}^{m-1} \sum_{j=1}^{n} \abs{[{\sf mat}(x)]_{i,j} - [{\sf mat}(x)]_{i+1,j}} \\ &:= \textstyle\sum_{i=1}^m \sum_{j=1}^{n-1} \abs{X_{i,j} - X_{i,j+1}} + \textstyle\sum_{i=1}^{m-1} \sum_{j=1}^{n} \abs{X_{i,j} - X_{i+1,j}}  
\end{align*} 
is a standard formulation of the total variation.  Here we see that \eqref{eq:deblurring_TV} is an instance of $\poi$ with $f(u) := -\textstyle\sum_{l=1}^N y_l\ln \big(u_l)$, $\calK:= \bbR^N_+$, $h(x) := (\sum_{l=1}^{N} a_l)^\top x + \lambda {\rm TV}(x) + \iota_{\calX}$ where $\calX = \{ x \in \bbR^N : 0 \le x \le Me \}$, $\rvA$ is defined by $(\rvA x)_l := a_l^\top x$, $l=1, \ldots, N$, and $\theta = \sum_{l=1}^N y_l$.  We note that $y_l \ge 1$ whenever $y_l \ne 0$ for all $l \in [N]$, and hence $f \in \calB_\theta(\calK)$.  In Section \ref{sec:delurring} we will discuss how the Frank-Wolfe sub-problem \eqref{subcbday} associated with 
\eqref{eq:deblurring_TV} can be efficiently solved.

\vspace{2ex}
\noindent {\em 2.\ Positron emission tomography (PET)}~\cite{Shepp_82,BenTal_01}. PET is a medical imaging technique that measures the metabolic activities of human tissues and organs. Typically, radioactive materials are injected into the organ of interest, and these materials emit (radioactive) events 
that can be detected by PET scanners.  The mathematical model behind this process is described as follows. Suppose that an emission object (e.g., a human organ) has been discretized into $n$ voxels. The number of 
events emitted by voxel $i$ ($i\in[n]$) is a Poisson random variable $\tilde X_i$ with {\em unknown} mean $x_i \ge 0$ and so $\tilde X_i\sim {\sf Poiss}(x_i)$, and  furthermore $\{\tilde X_i\}_{i=1}^n$ are assumed to be independent.  We also have a scanner array with $m$ bins. Each event emitted by voxel $i$ has a {\em known} probability $p_{ij}$ of being detected by bin $j$ ($j\in[m]$), and we assume that $\sum_{j=1}^m p_{ij} = 1$, i.e., the event will be detected by exactly one bin.  Let $\tilde Y_j$ denote the total number of events detected by bin $j$, whereby
\begin{equation}\label{thursday01}
\mathbb{E} [\tilde Y_j ]:= y_j := \textstyle\sum_{i=1}^n p_{ij} x_i \ . 
\end{equation}
By Poisson thinning and superposition, it follows that $\{\tilde Y_j\}_{j=1}^m$ are independent random variables 
and $\tilde Y_j\sim {\sf Poiss}(y_j)$ for all $j\in[m]$.  

We seek to perform maximum-likelihood (ML) estimation of the unknown means $\{x_i\}_{i=1}^n$ based on observations $\{Y_j\}_{j=1}^m$of the random variables $\{\tilde Y_j\}_{j=1}^m$. From the model above, we easily see that the log-likelihood of observing $\{Y_j\}_{j=1}^m$ given $\{\tilde X_i\}_{i=1}^n$ is (up to some constants)
\begin{equation}
l(x):= - \textstyle\sum_{i=1}^n x_i + \textstyle\sum_{j=1}^mY_j\ln \big(\sum_{i=1}^n p_{ij} x_i\big) \ ,
\end{equation}
and therefore an ML estimate of $\{x_i\}_{i=1}^n$ is given by an optimal solution $x^*$ of 
\begin{equation}
{\max}_{x\ge 0} \;l(x) \ . \label{eq:PET_0}
\end{equation}
  %
It follows from the first-order optimality conditions that 
any optimal solution $x$ must satisfy 
\begin{equation}
\textstyle\sum_{i=1}^n x_i = S := \sum_{j=1}^m Y_j \ ,  \label{eq:PET_extra_constraint}
\end{equation}
and by incorporating \eqref{eq:PET_extra_constraint} into \eqref{eq:PET_0} and defining the re-scaled variable $z:= x/S$, \eqref{eq:PET_0} can be equivalently written as 
\begin{equation}
{\min}_z \; L(z) := -\textstyle\sum_{j=1}^m Y_j\ln \big(\sum_{i=1}^n p_{ij} z_i\big)\quad \st\;\; {z\in\Delta_n} \ , \label{eq:PET_final}
\end{equation} where $\Delta_n := \{ z \in \mathbb{R}^n : \sum_{i=1}^n z_i =1, \ z \ge 0 \}$ is the unit simplex in $\mathbb{R}^n$.  Here we see that \eqref{eq:PET_final} is an instance of \eqref{poi} with $f(u) := -\textstyle\sum_{j=1}^m Y_j\ln \big(u_j)$, $\calK:= \bbR^m_+$, $h := \iota_{\Delta_n}$, $\rvA$ defined by $(\rvA z)_j := \sum_{i=1}^n p_{ij} z_i$, $j=1, \ldots, m$, and $\theta = \sum_{j=1}^m Y_j$.  We note that $Y_j \ge 1$ whenever $Y_j \ne 0$ for all $j \in [m]$, and hence $f \in \calB_\theta(\calK)$. 

\vspace{1ex}

\noindent {\em 3. Poisson phase retrieval}~\cite{Odor_16}. In Poisson phase retrieval, we seek to estimate an unknown unit complex signal $x\in\bbC^n$, namely $\normt{x}_2:= (x^H x)^{1/2}=1$ where $x^H$ denotes the conjugate transpose of $x$.  We estimate $x$ using $m$ linear measurements that are subject to Poisson noise; for $j\in[m]$, the $j$-th measurement vector is denoted by $a_j\in \bbC^n$, and the measurement outcome $y_j$ is a Poisson random variable such that $y_j\sim {\sf Poiss}(\tily_j)$, where $\tily_j:= \abst{\ipt{a_j}{x}}^2$.  Oder et al.~\cite{Odor_16} proposed to estimate $x$ by solving the following matrix optimization problem:
\begin{equation}
{\min}_X -\textstyle\sum_{j=1}^m y_j\ln \ipt{a_j a_j^H}{X} + \ipt{\sum_{j=1}^m a_j a_j^H}{X}\quad \st\;\; {X\in\calX:= \{X\in\bbH_+^n:\tr(X)\le c\}} \ , \label{eq:PFR}
\end{equation}
where $\ipt{\cdot}{\cdot}$ denotes the Frobenius matrix inner product, 
$\bbH_+^n$ denotes the set of complex Hermitian positive semi-definite matrices of order $n$, $\tr(X)$ denotes the trace of $X$, and the parameter $c>0$ is typically chosen as $c = (1/m)\sum_{j=1}^m y_j$. Let $X^*$ be the optimal solution of~\eqref{eq:PFR}. One then computes a unit eigenvector $\bar x$ associated with the largest eigenvalue of $X^*$ and uses $\bar x$ as the estimate of $x$. Note that~\eqref{eq:PFR} has a similar form to~\eqref{eq:PET_final} except in two ways:  first, the objective function in \eqref{eq:PFR} has an additional linear term, and second, the constraint set is the intersection of a nuclear norm ball with the positive semi-definite cone, 
instead of a simplex. Therefore, using the same arguments as above, we see that~\eqref{eq:PFR} is an instance of \eqref{poi}.  To solve~\eqref{eq:PFR}, \cite{Odor_16}  proposed a Frank-Wolfe method with a pre-determined step-size sequence, and showed that this method converges with rate $O(C/k)$, where $C$ depends on several factors including (i) the diameter of $\calX$ under the spectral norm, (ii) $\max_{j\in[m]} \normt{a_j}^2_2$, (iii) $\max_{j\in[m]}\max_{X\in\calX} \ipt{a_j a_j^H}{X}$, and (iv) $\min_{j\in[m]} \ipt{a_j a_j^H}{X_0}$ where $X_0\in\calX$ denotes the starting point of the Frank-Wolfe method.

\vspace{1ex}
\noindent {\em 4.\ Optimal expected log investment}~\cite{Cover_84,Algoet_88,Vardi_93}. In this problem we consider $n$ stocks in the market, and let  $R_i$ denote the random per-unit capital return on investing in stock $i$, for $i\in[n]$.  The random vector $R:= (R_1,\ldots,R_n)$ has unknown distribution $P$. An investor allocates her investment capital over these $n$ stocks, and let $w_i$ denote the (nonnegative) proportion of capital invested in stock $i$, whereby $w_i\ge 0$ for all $i\in[n]$ and $\sum_{i=1}^n w_i=1$. Define $w:=(w_1,\ldots,w_n)$. The goal of the investor is to maximize her expected log return $f(w):= \bbE_{R\sim P}[\ln(w^\top R)]$ subject to the constraint $w\in\Delta_n$ where $\Delta_n :=\{w \in \bbR^n : w \ge 0, \ e^Tw = 1\}$. The naturalness of this objective can be justified from several perspectives involving the principle that money compounds multiplicatively rather than additively, see the discussion and references in \cite{Cover_84,Algoet_88}.  Since $P$ is unknown, one can use a (historical) data-driven empirical distribution such as $\hatP_m:= \sum_{j=1}^m p_j \delta_{r_j}$, where $p_j > 0$, $\sum_{j=1}^m p_j=1$, $r_j\in\bbR^n$ is a realization of $R$ and $\delta_{r_j}$ denotes the unit point mass at  $r_j$ for $j \in [m]$. Under this empirical distribution, the investor instead solves the problem:
\begin{align}
{\min}_{w\in\Delta_n}&\;  -\textstyle\sum_{j=1}^{m} p_j\ln(r_j^\top w) \ . \label{eq:log_invest} 
\end{align}
Note that~\eqref{eq:log_invest} has the same basic format as the PET problem in~\eqref{eq:PET_final}. Indeed, 
both of these problems fall under a more general class of problems called ``positive linear inverse problems''~\cite{Vardi_93}. 
Define $p_{\min} := \min_{j \in [m]}\{p_j\}>0$ and consider re-scaling the objective function of \eqref{eq:log_invest} by $1/p_{\min}$, which ensures the coefficient in front of each $\ln(\cdot)$ term is at least $1$.  Then this re-scaled problem is an instance of $\poi$ with $f(u) := -\textstyle\sum_{j=1}^m (p_j/p_{\min}) \ln (u_j)$, $\calK:= \bbR^m_+$, $h := \iota_{\Delta_n}$, $\rvA$ defined by $(\rvA w)_j := r_j^\top w$ for $j \in [m]$, $\theta = 1/p_{\min}$, and $f \in \calB_\theta(\calK)$. 

\vspace{1ex}
\noindent {\em 5.\ Computing the analytic center}~\cite{Nest_04}. 
Given a nonempty solid polytope $\calQ = \{x\in \bbR^n: a_i^\top x\ge d_i,\;i=1,\ldots,m\}$, 
the function
\begin{equation}
b(x) := -\textstyle\sum_{i=1}^m \ln(a_i^\top x- d_i), \quad x\in  \calQ \ ,
\end{equation} is an $m$-self-concordant barrier $\calQ$, see \cite{Nest_94}.  We wish to compute the analytic center of $\calQ$ under $b$, which is the optimal solution to the problem  ${\min}_{x\in\calQ}\, b(x)$.  We can transform this problem into an instance of $(P)$ as follows. Define
\begin{equation}
f(x,t) := -\textstyle\sum_{i=1}^m \ln(a_i^\top x- td_i) = -\sum_{i=1}^m \ln(a_i^\top (x/t)- d_i) - m\ln (t)\ ,
\end{equation}
which is a $m$-logarithmically-homogeneous self-concordant barrier on the conic hull of $\calQ$, denoted by $\cone(\calQ)$ and defined by $$\cone(\calQ) := \cl\{(x,t)\in\bbR^{n+1}: x/t\in\calQ,\; t>0\} = \{x\in\bbR^n: a_i^\top x\ge td_i,\;i=1,\ldots,m,\; t \ge  0\} \ .$$ Then we can formulate the analytic center problem as 
\begin{align*}
{\min}_{(x,t)\in\bbR^{n+1}}\; f(x,t)\quad\st\quad (x,t)\in\cone(\calQ),\;t=1 \ ,
\end{align*}
which is an instance of $(P)$ with $f(u) := -\textstyle\sum_{i=1}^m  \ln (u_i)$, $\calK:= \bbR^m_+$, $h = \iota_{\calX}$ where $\calX= \{(x,t)\in\bbR^{n+1} : x \in \calQ, \ t=1\}$,  $\rvA$ defined by $(\rvA (x,t))_i := a_i^\top x- td_i$ for $i \in [m]$, and $\theta = m$.

The above formulation can be generalized to any nonempty convex compact set $\calQ\subseteq\bbR^{n}$ equipped with a (standard strongly) $\vartheta$-self-concordant barrier $b$ on $\calQ$. 
From~\cite[Proposition~5.1.4]{Nest_94}, there exists a constant $c\le 20$ for which  
\begin{equation}
f(x,t):= c^2(b(x/t)-2\vartheta\ln t) \label{eq:barrier_cone}
\end{equation}
is a $(2c^2\vartheta)$-logarithmically-homogeneous self-concordant barrier on 
$\cone(\calQ)$. 
Therefore using~\eqref{eq:barrier_cone} the analytic center problem can be formulated as an instance of $\poi$ in a similar way as above.



\vspace{1ex}
\noindent {\em 6.\ $D$-optimal design}~\cite{Fedorov_72}.  Given $m$ points $a^1,\ldots,a^m \in \mathbb{R}^n$ whose affine hull is $\mathbb{R}^n$, the $D$-optimal design problem is:
\begin{align}
\min \; h(x):= -\ln\det \big(\textstyle\sum_{i=1}^m x_i a_ia_i^T\big) \quad \st \;\; x\in\Delta_m \ . \label{eq:Dopt2}
\end{align}
In the domain of statistics the $D$-optimal design problem corresponds to maximizing the determinant of the Fisher information matrix $\EE(aa^T)$, see  \cite{kiefer1960equivalence}, \cite{atwood1969optimal}, as well as the exposition in \cite{boyd_04}.  And in computational geometry, $D$-optimal design arises as a Lagrangian dual problem of the minimum volume covering ellipsoid (MVCE) problem, which dates back at least 70 years to \cite{john}, see Todd \cite{toddminimum} for a modern treatment.  Indeed, \eqref{eq:Dopt2} is useful in a variety of different application areas, for example, computational statistics \cite{croux2002location} and data mining \cite{knorr2001robust}.  

A recent extension of \eqref{eq:Dopt2} is the design of a supervised learning pipeline for new datasets to maximize the Fisher information, see the recent paper by Yang et al. \cite{yang2020efficient}.  The optimization problem they consider is the following variant of \eqref{eq:Dopt2}:
\begin{align}
\min &\; h(x):= -\ln\det \big(\textstyle\sum_{i=1}^m x_i a_ia_i^T\big) \label{eq:Dopt2.5}\\ 
\quad \st &\;\; \textstyle\sum_{i=1}^n \bar t_i x_i \le \tau \ \label{eq:Dopt3} \\
\quad &\;\;x_i \in [0,1] \ \mbox{for} \ i \in [m] \ , \label{eq:Dopt4}
\end{align}
where the decision variable $x_i$ models the decision to fit model $i$ or not, $\bar t_i$ is the estimated pipeline running time of pipeline $i$, $\tau$ is the runtime limit, and $a_i$ is the vector of latent meta-features of model $i$ for $i \in [m]$.  The constraints \eqref{eq:Dopt4} are a linear relaxation of the (computationally unattractive) desired combinatorial constraints $x_i \in \{0,1\}$, $i \in [m]$.  We refer the interested reader to \cite{yang2020efficient} for further details and model variations. Here we see that \eqref{eq:Dopt2.5}-\eqref{eq:Dopt4} is an instance of \eqref{poi} with $f(U) := -\ln\det(U)$, $\theta = n$, $\rvA x := \textstyle\sum_{i=1}^m x_i a_ia_i^T$, and $h$ is the indicator function of the feasible region of the constraints \eqref{eq:Dopt3}-\eqref{eq:Dopt4}.

As mentioned earlier, the $D$-optimal design problem was one of the primary motivators for the research in this paper. 
Indeed, Khachiyan proved that his ``barycentric coordinate descent'' method for this problem -- which turns out to be precisely the Frank-Wolfe method (with exact line search) -- has a computational guarantee that is essentially $O\big(n^2/\varepsilon\big)$ iterations to produce an $\varepsilon$-approximate solution.  What has been surprising about this result is that the $D$-optimal design problem violates the basic assumption underlying the premise for the analysis of the Frank-Wolfe method, namely $L$-smoothness.  Khachiyan's proof used original and rather clever arguments that do not easily carry over elsewhere, which has begged the question of whether or not any of Khachiyan's arguments might underlie any general themes beyond $D$-optimal design, and if so what might be the mathematical structures driving any such themes?   We will answer these questions in the affirmative in Section \ref{darkout} by showing that the Frank-Wolfe method achieves essentially $O((\theta + \rh)^2/\varepsilon)$ iteration complexity when used to tackle any problem of the form \eqref{poi}, where $\rh$ is the variation of $h$ on its domain ($\rh := {\max}_{x,y\in\calX}\; \abst{h(x) - h(y)}$) and $f$ is a $\theta$-logarithmically homogeneous self-concordant barrier. When specialized to the $D$-optimal design problem, we have $\rh = 0$ (since $h$ is an indicator function), and $\theta=n$, whereby we recover Khachiyan's $O(n^2/\varepsilon)$ dependence on $\varepsilon$. 
In this respect, our results reveal certain intrinsic connections between $\theta$-logarithmic homogeneity and the Frank-Wolfe method.

Interestingly, we note that historically the theory of self-concordant functions was initially developed to present a general underlying theory for Newton's method for barrier methods in convex optimization.  However, the results in this paper indicate that a subclass of self-concordant functions, namely the class of $\theta$-logarithmically homogeneous self-concordant barriers, are also tied to an underlying theory for the Frank-Wolfe method.

By way of concluding this discussion, we note that the relevant literature contains some first-order methods other than Frank-Wolfe have been proposed to solve problems similar to~\eqref{poi}. For example, \cite{Tran_15} considered~\eqref{poi} with the linear operator $\rvA$ being invertible and the function $f$ being standard self-concordant but not necessarily a logarithmically-homogeneous barrier. The authors in \cite{Tran_15} proposed a proximal gradient method for solving this problem, and showed that the method globally and asymptotically converges to the unique optimal solution. However, the global convergence rate of this method was not shown.

\subsection{Contributions}\label{sec:contribution}

We summarize our contributions as follows:
\begin{enumerate}
\item We propose a generalized Frank-Wolfe method  for solving $\poi$ with $f\in\calB_\theta(\calK)$.  We show that the Frank-Wolfe method requires $O((\delta_0 + \theta + \rh)\ln(\delta_0) + (\theta+\rh)^2/\varepsilon)$ iterations to produce an $\varepsilon$-approximate solution of $\poi$, namely $x\in\dom F$ such that $F(x) - F^*\le \varepsilon$,  where $\delta_0$ denotes the initial optimality gap and $\rh$ denotes the variation of $h$ on its domain. This iteration complexity bound depends on just three (natural) quantities associated with $\poi$: (i) the initial optimality gap $\delta_0$, (ii) the complexity parameter $\theta$ of $f$, and (iii) the variation of $h$ on $\calX$.  When $h$ is the sum of a convex quadratic function and an indicator function, our algorithm specializes to that in Dvurechensky et al.\ \cite{Dvu_20}. However, our iteration complexity bounds are quite different from that in~\cite{Dvu_20} -- in particular our bounds are affine invariant, more naturally interpretable, and are typically easy to estimate and can yield an {\it a priori} bound on the number of iterations needed to guarantee a desired optimality tolerance $\varepsilon$.  These issues are discussed in details in Remark \ref{rmk:Dvu}. 
\item Our analysis also yields $O((\delta_0 + \theta + \rh)\ln(\delta_0) + (\theta+\rh)^2/\varepsilon)$ iteration complexity to produce $x\in\dom F$ whose Frank-Wolfe gap (defined in~\eqref{eq:FW_gap} below) is no larger than $\varepsilon$. Since the  Frank-Wolfe gap is constructed at each iteration and is often used as the stopping criterion for the Frank-Wolfe method, our result provides a further constructive bound on the number of iterations required to detect a desired optimality tolerance. 

\item When specialized to the $D$-optimal design problem, our general algorithm almost exactly recovers the iteration complexity of Khachiyan's specialized method for $D$-optimal design in~\cite{khachiyan1996rounding}. Indeed, the complexities of these two methods have identical dependence on $\varepsilon$, namely $n^2/\varepsilon$.  However, Khachiyan's specialized method has an improved ``fixed'' term over our general method by a factor of $\ln(m/n)$; see Remark~\ref{autumn} for details.

\item We present a mirror descent method with adaptive step-size applied to the (Fenchel) dual problem $\doi$ of $\poi$. The dual problem $\doi$ shares a somewhat similar structure to $\poi$ in that its objective function is non-smooth and non-Lipschitz. However, unlike $(P)$, the objective function has unbounded domain.  Although these features make the direct analysis of mirror descent rather difficult, through the duality of mirror descent and the Frank-Wolfe method we provide a computational complexity bound for this mirror descent method via the Frank-Wolfe method applied to $\poi$.   An application of our mirror descent method for $\doi$ arises in the sub-problem in Bregman proximal-point methods. 
\item We apply our method to the TV-regularized Poisson image de-blurring problem.  We present computational experiments that point to the potential usefulness of our generalized Frank-Wolfe method on this imaging problem in Section \ref{sec:delurring}.
\end{enumerate}


\subsection{Outline and Notation} 
\noindent {\bf Outline.} The paper is organized as follows. In Section~\ref{darkout} we present and analyze our generalized Frank-Wolfe method for $\poi$ when $f \in \calB_\theta(\calK)$, using an adaptive step-size strategy that is a natural extension of the strategy developed in~\cite{Dvu_20}. In Section \ref{doodle} we study the (Fenchel) dual $\doi$ of ${\poi}$ and derive and analyze a dual mirror descent method for solving $\doi$ based on the generalized Frank-Wolfe method for solving $(P)$. 
In Section~\ref{experiments} we present computational experiments that point to the potential usefulness of our generalized Frank-Wolfe method on Poisson image de-blurring problems with TV regularization, and we also present computational experiments on the PET problem.

\vspace{1ex}
\noindent {\bf Notation.} Let $\mathbb{R}^n_+ := \{ x \in \mathbb{R}^n : x \ge 0 \}$ and $\mathbb{R}^n_{++} := \{ x \in \mathbb{R}^n : x > 0 \}$. The set of integers $\{1,\ldots,n\}$ is denoted by $[n]$. The domain of a convex function $f$ is denoted by $\dom f :=\{x \in \mathbb{R}^n : f(x) < \infty \}$.  We use $H(x)$ to denote the Hessian of the function $f$.  The interior and relative interior of a set $\calS$ are denoted by $\inter\calS$ and $\relint \calS$, respectively. We use $e$ to denote the vector with entries all equal to 1, $\diag(x)$ to denote the diagonal matrix whose diagonal entries correspond to those of $x$, and $\Delta_n$ to denote the standard $(n-1)$-dimensional simplex in $\mathbb{R}^n$, namely $\Delta_n := \{ x \in \mathbb{R}^n : \sum_{i=1}^n x_i =1, \ x \ge 0 \}$. We use $\mathbb{S}_{+}^{n}$ ($\mathbb{S}_{++}^{n}$) to denote the set of $n\times n$ symmetric positive semidefinite (positive definite) matrices, and write $B\in\mathbb{S}_{+}^{n}$ as $B\succeq 0$ and $B\in\mathbb{S}_{++}^{n}$ as $B\succ 0$. The $p$-norm of a vector $x\in\bbR^n$ is denoted and defined by $\normt{x}_p = (\sum_{i=1}^n \abst{x_i}^p)^{1/p}$. 

\section{A generalized Frank-Wolfe method for $\poi$ when $f$ is a \\$\theta$-logarithmically-homogeneous self-concordant barrier}\label{darkout}

In this section we present and analyze a generalized Frank-Wolfe method for the composite optimization problem $\poi$ in the case when $f \in \calB_\theta(\calK)$, using an adaptive step-size strategy that is a natural extension of the strategy developed in Dvurechensky et al. \cite{Dvu_20}.  We assume throughout that $\calX := \dom h$ is a convex and  compact set, and that $\rvA(\calX)\cap\dom f\ne \emptyset$.  These two assumptions together with the differentiability of $f$ on $\inter\calK$ ensure that $\poi$ has at least one optimal solution which we denote by $x^*$ and hence $F^*=F(x^*)$.  

Let us introduce some important notation.  For any $u \in \inter\calK$, the Hessian $H(u)$ of $f$ is used to define the local (Hilbert) norm $\| \cdot \|_u$ defined by:
$$ \|w \|_u := \sqrt{\lranglet{w}{H(u)w}} \ \ \mathrm{for~all~} w \in \mathbb{R}^m \ . $$ 
 The Dikin ball $\calD(u,1)$ at $u\in \inter\calK$ is defined by $$ \calD(u,1) := \{v\in\calK:\normt{v-u}_u<1\} \ , $$ and it can be shown  that $\calD(u,1)\subseteq \inter\calK$, see Nesterov and Nemirovski~\cite[Theorem~2.1.1]{Nest_94}.  The self-concordant function $f$ is well-behaved inside the Dikin ball as we will review shortly.   Define the univariate function $\omega$ and its Fenchel conjugate $\omega^*$ as follows:
\begin{equation}\label{eq:omegas} \omega(a) := -a - \ln(1-a) \;\; \forall\,  a < 1 \ , \ \ \   \mathrm{and} \ \  \omega^*(a) := a - \ln(1+a) \;\; \forall\,  a > -1  \ . 
\end{equation}
It turns out that $f$ can be nicely upper-bounded inside the Dikin ball, namely:
\begin{align}
f(v)&\le f(u) + \lranglet{\nabla f(u)}{v-u} + \omega\left(\normt{v-u}_u\right) \ \forall\,u\in\inter\calK, \;\forall \, v \in \calD(u,1) \ ,\label{eq:curvature_SC}
\end{align}
see Nesterov~\cite[Theorem~4.1.8]{Nest_04}. 

We now develop our generalized Frank-Wolfe method for the composite optimization problem $\poi$ under the condition that $f \in \calB_\theta(\calK)$, and whose formal rules are presented in Algorithm~\ref{algo:FW_SC}. First, we choose any starting point $x^0\in\calX$ such that $\rvA x^0\in\dom f(=\inter\calK)$, namely $x^0\in\bar{\calX}:=\calX\cap\rvA^{-1}(\dom f)$. Indeed, from the description below, we will see that the whole sequence of iterates $\{x^k\}_{k\ge 0}$ generated by our method lies in $\bar{\calX}$. Given the current iterate $x^k\in\bar{\calX}$,
 the method first computes the gradient $\nabla f({\rvA}x^k)$ and then solves for a minimizer $v^k$ of the generalized Frank-Wolfe sub-problem given by
$$v^k\in{\argmin}_{x\in\bbR^n} \lranglet{\nabla f(\rvA x^k)}{\rvA x} + h(x)\ .$$  The next iterate is then determined as a convex combination of $x^k$ and $v^k$: $x^{k+1} \gets x^k + \alpha_k (v^k - x^k)$ for some $\alpha_k \in [0,1]$, where $\alpha_k$ is the step-size at iteration $k$.  For $L$-smooth functions $f$, the step-size can be chosen by a variety of strategies, including simple rules such as $\alpha_k = \tfrac{2}{(k+2)}$, exact line-search to minimize $f(x^k + \alpha(v^k - x^k))$ on $\alpha\in [0,1]$, or an adaptive strategy based on curvature information, etc.  Here we present an adaptive strategy based on the upper bound model \eqref{eq:curvature_SC}, which can also be viewed as an extension of the adaptive strategy used in \cite{Dvu_20} and which itself is an extension of the adaptive strategy in Demyanov and Rubinov~\cite{dem1967minimization}.  Define the Frank-Wolfe gap (``FW-gap") $G_k$ by
\begin{equation}
G_k := \lranglet{\nabla f(\rvA x^k)}{\rvA(x^k - v^k)}+ h(x^k) - h(v^k) \ , \label{eq:FW_gap}
\end{equation}
and the optimality gap $\delta_k := F(x^k) - F^*$. Note that $G_k \ge 0$ and in fact by the convexity of $f$ it holds that 
\begin{align}
\delta_k &= (f(\rvA x^k) - f(\rvA x^*)) + (h(x^k) - h(x^*))\nn\\
&\le \lranglet{\nabla f(\rvA x^k)}{\rvA(x^k - x^*)} + (h(x^k) - h(x^*))\nn\\
&\le \lranglet{\nabla f(\rvA x^k)}{\rvA(x^k - v^k)} + (h(x^k) - h(v^k)) = G_k \ , \label{squirrels}
\end{align} 
hence $G_k$ is indeed an upper bound on the optimality gap $\delta_k$.  Denoting $D_k := \|\rvA(v^k-x^k)\|_{\rvA x^k}$, we then have that  for any $\alpha\ge 0$, 
\begin{equation}
f(\rvA x^{k} + \alpha\rvA (v^k-x^k)) \le f(\rvA x^k) - \alpha \lranglet{\nabla f(\rvA x^k)}{\rvA (x^k - v^k)} + \omega\left(\alpha D_k\right)  \ . 
\label{eq:curvature_SC_algo}
\end{equation} 
(Note that if $\alpha<1/D_k$, then~\eqref{eq:curvature_SC_algo} follows from \eqref{eq:curvature_SC}; otherwise, by the definition of $\omega$ in~\eqref{eq:omegas}, we have $\omega\left(\alpha D_k\right)=+\infty$ and~\eqref{eq:curvature_SC_algo} still holds.)   
Also, by the convexity of $h$, we have
\begin{equation}
h(x^{k} + \alpha(v^k-x^k))\le (1-\alpha)h(x^{k}) + \alpha h(v^k) = h(x^{k}) - \alpha(h(x^k) - h(v^{k})) \ .\label{eq:ub_h}
\end{equation}
Adding~\eqref{eq:curvature_SC_algo} and~\eqref{eq:ub_h} together, we obtain 
\begin{equation}
F(x^{k} + \alpha(v^k-x^k))\le F(x^k) - \alpha G_k + \omega\left(\alpha D_k\right) \ , \quad \forall\,\alpha\ge 0  \ , \label{eq:ub_F}
\end{equation}
and optimizing the right-hand-side over $\alpha \in [0,1]$ 
yields the step-size:
$$ \alpha_k := \min\left\{\frac{G_k}{D_k(G_k+D_k)}, 1\right\}.$$ 
Notice that this step-size specializes precisely to the adaptive step-size developed in \cite{Dvu_20} in the case when the function $h$ is the indicator function $\iota_{\calX}$ of a compact convex set $\calX$, since in this case $h(x^k)=h(v^k)=0$ for all $k$ and hence $G_k$ turns out to be the standard ``gap function'' $\lranglet{\nabla f(\rvA x^k)}{\rvA(x^k - v^k)}$ as used in \cite{Dvu_20}. In addition, note that the step-size $\alpha_k$ ensures that $x^{k+1}\in\bar{\calX}$. 

\begin{algorithm}[t!]
\caption{(generalized) Frank-Wolfe Method for composite optimization involving $f\in \calB_\theta(\calK)$ with adaptive step-size}\label{algo:FW_SC}
\begin{algorithmic}
\State {\bf Input}: Starting point $x^0\in\bar{\calX}:=\calX\cap\rvA^{-1}(\dom f)$ 
\State {\bf At iteration $k\in\{0,1,\ldots\}$}:
\begin{enumerate}
{\setlength\itemindent{10pt} \item \label{item:LMO}  Compute $\nabla f(\rvA x^k)$ and $v^k\in\argmin_{x\in\bbR^n} \lranglet{\nabla f(\rvA x^k)}{\rvA x} + h(x)$} 
{\setlength\itemindent{10pt} \item \label{item:step_size_SC} Compute $G_k := \lranglet{\nabla f(\rvA x^k)}{\rvA(x^k - v^k)}+ h(x^k) - h(v^k)$ and $D_k:=  \|\rvA(v^k-x^k)\|_{\rvA x^k}$ , and compute the step-size:
\begin{align}
\alpha_k := \min\left\{\frac{G_k}{D_k(G_k+D_k)} \ , 1\right\} \label{eq:step_size_SC}
\end{align}
\setlength\itemindent{10pt} \item Update $x^{k+1}:= x^k + \alpha_k (v^k-x^k)$ 
}
\end{enumerate}
\end{algorithmic}
\end{algorithm}


Before presenting our analysis of Algorithm \ref{algo:FW_SC} we make two remarks. First, notice that the complexity parameter $\theta$ of $f$ is not needed to run Algorithm~\ref{algo:FW_SC}, but it will play a central role in analyzing the iteration complexity of the algorithm.  In a sense, Algorithm~\ref{algo:FW_SC} automatically adapts to the value of $\theta$. 
Second, for most applications --- including all of the applications discussed in Section~\ref{sec:applications} --- the computational cost of computing $D_k$ (in Step~\ref{item:step_size_SC}) is of the same order as computing $G_k$, and grows linearly in the ambient dimension of the variable $x$.  (Note that here our discussion focuses on the dependence of the computational cost on the dimension of $x$ only.) 
To see this, let us fix any $v,x\in\dom F$. For the first application in Section~\ref{sec:applications} (where $N$ denotes the dimension), both $u:=\rvA x$ and $w:=\rvA v$ can be computed in $O(qN)$ time, due to the fact that the matrix representation of $\rvA$ has only $O(qN)$ nonzeros. 
Since $D_k^2= \sum_{l=1}^N (u_l-w_l)^2/u_l^2$, it  can be computed in $O(qN)$ time. Using similar reasoning, we easily see that for the second, third and fourth applications (where $n$ denotes the dimension), $D_k$ can be computed in $O(mn)$ time. For the last application (namely the $D$-optimal design problem), where the dimension is denoted by $m$, $D_k$ can be computed in $O(mn^2+n^3)$ time for $k=0$ and $O(n^2)$ time for $k\ge 1$; for details see Appendix~\ref{app:D_k}. 



\subsection{Computational guarantees for Algorithm \ref{algo:FW_SC}} \label{sec:comp_guarantee}

We now present our computational guarantees for Algorithm \ref{algo:FW_SC}.  These guarantees depend on only three natural quantities associated with $\poi$: (i) the initial optimality gap $\delta_0$, (ii) the complexity parameter $\theta$ of $f$, and (iii) the {\em variation} of $h$ on $\calX$, which is defined as:
\begin{equation}\label{range}
\rh := {\max}_{x,y\in\calX}\; \abst{h(x) - h(y)} \ . 
\end{equation} 
Regarding the variation $\rh$ we mention two cases in particular: 
\begin{enumerate}
\item when $h = \iota_{\calX}$, we have $\rh = 0$, and 
\item when $h$ is $L$-Lipschitz on $\calX$ with respect to some norm $\normt{\cdot}$, we have $\rh\le L D_{\calX,\normt{\cdot}}$, where 
\begin{equation}
D_{\calX,\normt{\cdot}}:= {\sup}_{x,x'\in\calX}\,\normt{x-x'}<+\infty \label{eq:diameter}
\end{equation}
is the diameter  of $\calX$ under $\normt{\cdot}$.  And in particular if $h=\normt{\cdot}$, then $\rh \le D_{\calX,\normt{\cdot}}$. 
\end{enumerate}


\begin{theorem} \label{thm:SC}
Suppose that $f \in \calB_\theta(\calK)$ and that Algorithm~\ref{algo:FW_SC} is initiated at $x^0 \in \bar{\calX}$. Let $\delta_0$ denote the initial optimality gap.
\begin{enumerate}
\item \label{item:behavior_Algo_SC} At iteration $k$ of Algorithm~\ref{algo:FW_SC} the following hold:
\begin{enumerate}[leftmargin = 2.5em]
\item If $G_k > \theta + \rh$, then the optimality gap decreases at least linearly at the iteration:
\begin{equation}
\delta_{k+1}\le \left(1-\frac{1}{5.3(\delta_0+\theta+\rh)}\right)\delta_k \ , \label{eq:rmflin_conv_SC}
\end{equation}
\item  If $G_k \le \theta + \rh$, then the inverse optimality gap increases by at least a constant at the iteration:
\begin{equation}
\frac{1}{\delta_{k+1}} \ge \frac{1}{\delta_k} + \frac{1}{12 (\theta+\rh)^2} \ , \ \ \ \mbox{and} \label{eq:sublin_conv_SC}
\end{equation}
\item \label{item:K_lin} The number of iterations $K_{\mathrm{Lin}}$ where $G_k > \theta+\rh$ occurs is bounded from above as follows: $K_{\mathrm{Lin}} \le \lceil 5.3(\delta_0 + \theta+\rh)\ln(10.6\delta_0) \rceil$. 
\end{enumerate}

\item \label{item:K_eps} Let $K_\varepsilon$ denote the number of iterations required by Algorithm \ref{algo:FW_SC} to obtain $\delta_k \le \varepsilon$.  Then:
\begin{equation}
K_\varepsilon   \le  \lceil 5.3(\delta_0 + \theta+\rh)\ln(10.6\delta_0) \rceil +  \left\lceil12(\theta+\rh)^2 \max\left\{\frac{1}{\varepsilon} - \frac{1}{\delta_0} \ , 0 \right\}\right\rceil \ .
\label{rob6miles}
\end{equation}

\item \label{item:FWGAP} Let $\mathrm{FWGAP}_\varepsilon$ denote the number of iterations required by Algorithm \ref{algo:FW_SC} to obtain $G_k \le \varepsilon$.  Then:
\begin{equation}
\mathrm{FWGAP}_\varepsilon   \le  \lceil 5.3(\delta_0 + \theta+\rh)\ln(10.6\delta_0) \rceil + \left\lceil \frac{24(\theta+\rh)^2}{\varepsilon} \right\rceil \ . 
\label{rob7miles}
\end{equation}
\end{enumerate}
\end{theorem}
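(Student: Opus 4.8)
The plan is to reduce the whole theorem to one per-iteration descent inequality, convert properties (P1)--(P3) into a bound linking $D_k$, $G_k$ and $\theta+\rh$, and then split the analysis into a ``large-gap'' (linear) regime and a ``small-gap'' (sublinear) regime whose counts assemble into parts 2 and 3. First I would extract the descent guarantee from \eqref{eq:ub_F}: minimizing $F(x^k)-\alpha G_k+\omega(\alpha D_k)$ over $\alpha\in[0,1]$, the unconstrained minimizer is exactly $\alpha^\star=G_k/(D_k(G_k+D_k))$ (the step-size in \eqref{eq:step_size_SC}), and substituting it collapses the bound to the clean form
\[
\delta_k-\delta_{k+1}\ \ge\ \omega^*\!\left(G_k/D_k\right)\qquad\text{whenever }\alpha^\star\le 1,
\]
while if $\alpha^\star>1$ (so $\alpha_k=1$) the guaranteed decrease is $G_k-\omega(D_k)$, a case I will only meet in the small-gap regime. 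Together with $\delta_k\le G_k$ from \eqref{squirrels} and the elementary estimate $\omega^*(a)\ge a^2/(2(1+a))$, this inequality drives everything.

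The conceptual heart is the second step: bounding $D_k$ through the barrier geometry. Writing $u:=\rvA x^k\in\inter\calK$ and $w:=\rvA v^k\in\calK$, log-homogeneity supplies the identities $\lranglet{\nabla f(u)}{u}=-\theta$, $H(u)u=-\nabla f(u)$ and $\normt{u}_u^2=\theta$, together with the key inequality $\normt{w}_u\le\lranglet{-\nabla f(u)}{w}=:s$ valid for all $w\in\calK$. The definition of $G_k$ rearranges to $s=G_k+\theta-(h(x^k)-h(v^k))\le G_k+\theta+\rh$, and expanding the local norm gives $D_k^2=\normt{w}_u^2-2s+\theta\le s^2-2s+\theta$. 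This single relation yields both regime bounds: when $G_k>\theta+\rh$ one has $s>2\theta$, so $s^2-2s+\theta\le s^2$ and hence $D_k\le s\le G_k+\theta+\rh$; when $G_k\le\theta+\rh$ one has $s\le 2(\theta+\rh)$, and maximizing $s^2-2s+\theta$ over this range (using $\theta\le\theta+\rh$ and $\theta\ge 1$) gives $D_k\le 2(\theta+\rh)$.

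With these bounds the two recursions fall out. In the small-gap regime I insert $D_k\le 2(\theta+\rh)$ and $\omega^*(G_k/D_k)\ge G_k^2/(2D_k(G_k+D_k))$ into the descent inequality, so that $\delta_k-\delta_{k+1}\ge G_k^2/(12(\theta+\rh)^2)$; dividing by $\delta_k\delta_{k+1}$ and using $\delta_{k+1}\le\delta_k\le G_k$ produces $1/\delta_{k+1}\ge 1/\delta_k+1/(12(\theta+\rh)^2)$, which is \eqref{eq:sublin_conv_SC}. In the large-gap regime $D_k\le G_k+\theta+\rh$ forces $G_k/D_k>1/2$, on which range $\omega^*(a)\ge a/5.3$; thus $\delta_k-\delta_{k+1}\ge G_k/(5.3 D_k)\ge\delta_k/(5.3(\delta_0+\theta+\rh))$, where the last step uses $D_k\le G_k+\theta+\rh$ together with $\delta_k\le\delta_0$ and $\delta_k\le G_k$ to verify $G_k(\delta_0+\theta+\rh)\ge\delta_k D_k$. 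This is the contraction \eqref{eq:rmflin_conv_SC}; moreover each such step obeys $\delta_k\ge\omega^*(G_k/D_k)>\omega^*(1/2)\ge 1/10.6$.

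Finally I would assemble the counts. For 1(c), $\delta$ is non-increasing and each large-gap step contracts it by the factor in \eqref{eq:rmflin_conv_SC}, while the bound $\delta_k>1/10.6$ shows large-gap steps cease once $\delta$ falls below $1/10.6$; geometric decay from $\delta_0$ to this threshold, using $-\ln(1-\rho)\ge\rho$, needs at most $\lceil 5.3(\delta_0+\theta+\rh)\ln(10.6\delta_0)\rceil$ of them. For part 2, the small-gap recursion telescopes over the small-gap iterations (large-gap steps only increase $1/\delta$), giving $1/\delta_K\ge 1/\delta_0+(\#\text{small-gap steps})/(12(\theta+\rh)^2)$ and hence the second term of \eqref{rob6miles}. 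For part 3, I would combine the resulting decay $\delta_k\le 12(\theta+\rh)^2/N$ with $\delta_k-\delta_{k+1}\ge G_k^2/(12(\theta+\rh)^2)$ in a standard ``second-half'' window of $N/2$ small-gap steps: summing the decrease across the window bounds $\sum G_k^2$, whence $\min_k G_k\le 24(\theta+\rh)^2/N$, yielding \eqref{rob7miles}. I expect the single real obstacle to be the geometric bound on $D_k$ of the second paragraph --- the inequality $\normt{w}_u\le\lranglet{-\nabla f(u)}{w}$ and the identity $\normt{u}_u^2=\theta$ are exactly what let $\theta$ and $\rh$ stand in for the absent $L$-smoothness --- after which the remaining work is bookkeeping to pin down the absolute constants $5.3$, $12$, $24$, and $10.6$ via the two tight lower bounds on $\omega^*$.
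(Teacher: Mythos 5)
Your overall route is the same as the paper's: the descent inequality $\Delta_k\ge\omega^*(G_k/D_k)$, the log-homogeneity identities giving $D_k^2=\|{\rvA v^k}\|_{\rvA x^k}^2-2s+\theta\le s^2-2s+\theta$ with $s\le G_k+\theta+\rh$ (this is exactly the computation in Proposition~\ref{keyinequality}), the two-regime split at $G_k=\theta+\rh$, the $\delta_k>1/10.6$ threshold for counting linear steps, the telescoping of $1/\delta_k$, and the second-half-window argument for the FW-gap. Your substitutes for the paper's ingredients check out: $\omega^*(a)\ge a^2/(2(1+a))$ is valid and, combined with your regime-specific bound $D_k\le 2(\theta+\rh)$, reproduces the constant $12$; the verification $G_k(\delta_0+\theta+\rh)\ge\delta_k D_k$ matches the paper's monotonicity step.

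There are, however, two concrete holes, both caused by the clipping of the step-size at $\alpha_k=1$ in \eqref{eq:step_size_SC}. First, in the small-gap regime you only ever invoke $\Delta_k\ge\omega^*(G_k/D_k)$, which is the value of the model at the \emph{unconstrained} minimizer and is therefore only available when $\alpha_k<1$; you explicitly note that the case $\alpha_k=1$ yields only $\Delta_k\ge G_k-\omega(D_k)$ and that it occurs in this regime, but you never show that $G_k-\omega(D_k)\ge G_k^2/(12(\theta+\rh)^2)$. This is not bookkeeping: the paper needs the inequality $G_k\ge D_k^2/(1-D_k)$ (from $\alpha_k=1$) together with the separate estimate $\ln(1+s)\ge s-s^2/(2(1-\abs{s}))$ of Proposition~\ref{lem:Karmarkar} to conclude $\Delta_k\ge G_k/2$ there, so \eqref{eq:sublin_conv_SC} is unproved for those iterations as your argument stands. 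Second, your assertion that $\alpha_k=1$ is ``only met in the small-gap regime'' is exactly the claim that $D_k(G_k+D_k)>G_k$ whenever $G_k>\theta+\rh$, and none of the identities you list in your second paragraph delivers it: it requires a \emph{lower} bound on $D_k$, which the paper obtains from the barrier inequality $\abs{\lranglet{\nabla f(u)}{w}}\le\sqrt{\theta}\,\|w\|_u$ (property~\ref{item:def_SCB}), giving $D_k\ge(G_k-\rh)/\sqrt{\theta}>\sqrt{\theta}\ge1$. Without this, \eqref{eq:rmflin_conv_SC} is also only established for the iterations where $\alpha_k$ happens to be interior. Both gaps are fillable with the cited tools, but as written the per-iteration claims in part 1 are proved only on a subcase.
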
\qed

Before we present our proof, let us make a few remarks about the results in Theorem \ref{thm:SC}.

\begin{remark}[Discussion of complexity results]\label{itslate}
Theorem~\ref{thm:SC} indicates that if $f\in\calB_\theta(\calK)$, then the iteration complexity to obtain an $\varepsilon$-optimal solution using Algorithm~\ref{algo:FW_SC} is 
\begin{equation}
O\big((\delta_0 + \theta + \rh)\ln(\delta_0) + (\theta + \rh)^2/\varepsilon\big) \ ,  \label{eq:complexity_SC}
\end{equation}
which only depends on three measures, namely (i) the logarithmic homogeneity constant (also known as the ``complexity value'') $\theta$ of $f$, (ii) the initial optimality gap $\delta_0$, and (iii) the variation $\rh$ of $h$ on its domain, in addition to the desired optimality gap $\varepsilon$.  Furthermore, in most of the applications discussed in Section \ref{sec:applications} (namely applications (2.), (4.), (5.), and (6.)) we have $h = \iota_{\calX}$ for some $\calX$ and hence $\rh = 0$, and so the iteration complexity depends only on $\theta$ and $\delta_0$.

It is interesting to note in the case when $h=\iota_\calX$ is the indicator function of a compact region $\calX$, that the iteration complexity bound \eqref{eq:complexity_SC} does not rely on any measure of size of the feasible region $\calX$, since in this case $\rh = 0$.  And even when $\rh >0$, the complexity bound \eqref{eq:complexity_SC} has no specific dependence on the behavior of $\bar f$ on $\calX$.  In this way we see that the only way that the behavior of $\bar f$ enters into the iteration complexity is simply through the value of $\theta$.  (This is in contrast to the traditional set-up of the Frank-Wolfe method for $L$-smooth optimization, where the fundamental iteration complexity depends on a bound on the {\em curvature} of the function $\bar f$ on the feasible region $\calX$ -- which we will discuss later in Remark \ref{rmk:Dvu} -- which in turn can grow quadratically in the diameter of the feasible region, see for example \cite{Jaggi_13,Freund_16}.)

We also note that the iteration complexity bound \eqref{eq:complexity_SC} is also independent of any properties of the linear operator $\rvA$, and in this way its dependence on a specific data instance $\rvA$ is only through the initial optimality gap $\delta_0$.  Therefore \eqref{eq:complexity_SC} is data-instance independent except for the way that the data $\rvA$ affects the initial optimality gap.
\end{remark}

\begin{remark}[Comparison with Khachiyan~\cite{khachiyan1996rounding} for $D$-optimal design]\label{autumn}
Let us specialize the complexity bound in~\eqref{eq:complexity_SC} to the $D$-optimal design problem in~\eqref{eq:Dopt2}, and compare it with the complexity bound in Khachiyan~\cite{khachiyan1996rounding}.  Note that for the problem in~\eqref{eq:Dopt2} we have $\theta=n$, i.e., the dimension of the ambient space of the points $a_1, \ldots, a_m$.  In addition, if we choose the starting point $p^0 = (1/m)e$, where $e:=(1,\ldots,1)\in\bbR^m$, 
then  
\begin{equation}
\delta_0\le n\ln(m/n)  \label{eq:bound_delta_0}
\end{equation}
(which we show in Appendix~\ref{app:proof_bound_delta_0}), and then based on~\eqref{eq:bound_delta_0} and $\rh =0$, the itration complexity bound in~\eqref{eq:complexity_SC} becomes
\begin{equation}
O\big(n\ln(m/n)(\ln n + \ln\ln(m/n)) + n^2/\varepsilon\big) \ . \label{eq:comp_Dopt_ours}
\end{equation}
Using the same starting point, Khachiyan's Frank-Wolfe method \cite{khachiyan1996rounding} uses exact line-search (based on a clever observation from the Inverse Matrix Update formula \cite{hager}), and attains the complexity bound
\begin{equation}
O\big(n(\ln n + \ln\ln(m/n)) + n^2/\varepsilon\big) \ . \label{eq:comp_Dopt_Khachiyan}
\end{equation}
Observe that~\eqref{eq:comp_Dopt_ours} has the exact same dependence on $\varepsilon$ as~\eqref{eq:comp_Dopt_Khachiyan}, namely $O(n^2/\varepsilon)$, but its first term 
is inferior to~\eqref{eq:comp_Dopt_Khachiyan} by the factor $O(\ln(m/n))$.  The improvement in the first term of Khachiyan's bound over the bound in \eqref{eq:comp_Dopt_ours} is due to his improved estimate of the linear convergence rate in the case $G_k > \theta$ in Algorithm~\ref{algo:FW_SC}, which arises from exploiting an exact line-search on $f$.  This is in contrast with our method, which only does an exact line-search on  the upper bound model of $f$ in \eqref{eq:ub_F}. A detailed analysis of this last point is given in Remark \ref{stormymonday} at the end of this section.
\end{remark}

\begin{remark}[Comparison with Dvurechensky et al.~\cite{Dvu_20}] \label{rmk:Dvu} 
The recent work of Dvurechensky et al.~\cite{Dvu_20} considers the Frank-Wolfe method for the problem $\min_{x\in\calX}\, \bar F(x)$, 
where $\calX$ is nonempty, convex and compact and $\bar F$ is a {non-degenerate} (strongly) $M$-self-concordant function for some $M>0$. This means $\barF$ is convex and three-times differentiable on $\dom \barF$, $\nabla^2 \barF(x)\succ 0$ for all $x\in\dom \barF$ (this is the definition that $\barF$ is non-degenerate), and 
\begin{equation}
\abst{D^3\barF(x)[z,z,z]}\le 2M^{-1/2}(\lranglet{\nabla^2 \barF(x)z}{z})^{3/2} \  \quad \forall\,x\in\dom \barF \  , \;\;\forall\,z\in\bbR^n \ . \label{eq:SC_def_M}
\end{equation}
For convenience of comparison, henceforth let $M=1$. 
When $h$ is the sum of a convex quadratic function and an indicator function, i.e., $h(x) = \tfrac{1}{2}\lranglet{x}{Qx} + \xi^\top x + \iota_\calX(x)$ for some $Q \succeq 0$ and $\xi\in\bbR^n$,  
then our method coincides with that in Dvurechensky et al.\ \cite{Dvu_20} with $\barF(x) = f(\rvA x) + \tfrac{1}{2}\lranglet{x}{Qx} + \xi^\top x$. 
The complexity bound developed in \cite{Dvu_20} for computing an $\varepsilon$-optimal solution is: 
\begin{equation}
O\left(\sqrt{L(x^0)}D_{\calX,\normt{\cdot}_2}\ln\left(\frac{\delta_0}{\sqrt{L(x^0)}D_{\calX,\normt{\cdot}_2}}\right) + \frac{L(x^0)D_{\calX,\normt{\cdot}_2}^2}{\varepsilon}\right) \ , \label{eq:comp_Dvu}
\end{equation}
where  
\begin{align}
\calS(x^0) &:= \big\{x\in\dom \barF\cap\calX\,:\,  \barF(x) \le \barF(x^0) \big\} \ , \mbox{and} \label{eq:S_Dvu}\\
L(x^0) &:= {\max}_{x\in \calS(x^0)} \;\;\lambda_{\max} (\nabla^2 \barF(x) ) <+\infty \ . \label{eq:L_Dvu}
\end{align}
In~\eqref{eq:L_Dvu} $\lambda_{\max} (\nabla^2 \barF(x))$ denotes the largest eigenvalue of $\nabla^2 \barF(x)$, and in \cite{Dvu_20} it is further assumed that $\barF$ is non-degenerate (which necessarily presumes that $\rank (\rvA) = n $)  
in order to ensure the compactness of $\calS(x^0)$, and hence the finiteness of $L(x^0)$. 

It is instructive to compare the two iteration complexity bounds~\eqref{eq:complexity_SC} and~\eqref{eq:comp_Dvu}, and we note the following advantageous features of our bound \eqref{eq:complexity_SC} as follows: 
\begin{itemize}
\item {\em Affine invariance.} Affine invariance is an important structural property of certain algorithms; for instance Newton's method is affine invariant whereas the steepest descent method is not.  It is well known that the Frank-Wolfe method is affine invariant.  Current state-of-the-art complexity analysis of the Frank-Wolfe method for $L$-smooth functions yields an appropriate affine-invariant complexity bound by utilizing the so-called {\em curvature constant} $C_{\barF}$ of Clarkson \cite{clarkson} defined by 
\begin{equation}
C_{\barF}:= 
\max_{x,y\in\calX,\alpha\in[0,1]}\,\frac{2}{\alpha^2}(\barF(x+\alpha(y-x))-\barF(x) - \alpha\lranglet{\nabla \barF(x)}{y-x}) \ ,
\end{equation}
which is a finite affine-invariant quantity \cite{clarkson} when $\bar F$ is $L$-smooth. (This is the same curvature which was alluded to in Remark \ref{itslate}.) Of course $C_{\barF}$ is not typically finite when $\barF$ is self-concordant.  The complexity bound in \eqref{eq:comp_Dvu} depends on measures that are tied to the Euclidean inner product and norm, namely $D_{\calX,\normt{\cdot}_2}$ and $L(x^0)$, and are not affine-invariant, even though the Euclidean norm plays no part in the algorithm. (Under an invertible affine transformation of the variables these measures will change but the performance of the Frank-Wolfe method will not change.)  In contrast, all the quantities in our complexity bounds, namely $\delta_0$, $\theta$ and $\rh$ are affine-invariant, therefore the complexity bounds in \eqref{eq:complexity_SC} are affine-invariant.

\item {\em Interpretability.} Apart from $\varepsilon$, our complexity result only depends on $\delta_0$, $\theta$, and $\rh$, all of which admit natural behavioral interpretations. 
Specifically, $\delta_0$ measures the initial sub-optimality gap, $\theta$ is the ``complexity parameter'' of the barrier $f$ (in the lexicon of Renegar \cite{Renegar_01}), and $\rh$ measures the variation of $h$ over the set $\calX$. 

\item {\em Ease of parameter estimation.} Note that all of the three parameters $\delta_0$, $\theta$, and $\rh$ in our complexity bound~\eqref{eq:complexity_SC} are either easy to know or  easy to appropriately bound. 
Given a logarithmically-homogeneous self-concordant barrier $f$, its complexity value $\theta$ is typcially known {\it a priori} or can be easily determined using~\ref{item:grad_identity} of Lemma \ref{lem:LHSCB}.  
Since  $\delta_0 \le G_0$, a natural upper bound on $\delta_0$ is the initial FW-gap $G_0$,  which is computed in the second step at iteration $k=0$ of Algorithm \ref{algo:FW_SC}.  Regarding $\rh$, note that $\rh = 0 $ when $h$ is the indicator function of a convex set $\calX$, namely $h = \iota_\calX$.  In the case when $h(x) := \xi^\top x +  \iota_\calX(x)$ then $\rh$ can be computed exactly as the difference of two linear optimization optimal values on $\calX$ or can be upper bounded using $$\rh = {\max}_{x,x'\in\calX}\,\abst{\xi^\top (x-x')}\le \normt{\xi}_*D_{\calX,\normt{\cdot}} \ , $$ in the case when the norm $\normt{\cdot}$ can possibly be chosen to yield easily computable values of $D_{\calX,\normt{\cdot}}$.  Apart from this case, there also exist many other cases where the simple nature of $h$ and $\calX$ yield easily computable upper-bounds on $\rh$. 
\end{itemize}
\end{remark}

\subsection{Proof of Theorem \ref{thm:SC}}

We first state some facts about $\theta$-logarithmically homogeneous self-concordant barrier functions. 

\begin{lemma}[{see Nesterov and Nemirovskii~\cite[Corollary~2.3.1, Proposition~2.3.4, and Corollary 2.3.3]{Nest_94}}]\label{lem:LHSCB}
If $f\in \calB_\theta(\calK)$, 
then for any $u\in\inter\calK$, 
 we have 
\begin{enumerate}[start = 4,label = {\rm (P\arabic*)}, leftmargin = 3.9em]
\item  \label{item:def_SCB} $\abs{\lranglet{\nabla f(u)}{w}} \le \sqrt{\theta}\|w\|_u$ $\quad \forall\,w\in\bbR^m$, 
\item \label{item:recession_cone} $\|v\|_u \le -\lranglet{\nabla f(u)}{v}$ $\quad \forall\,v\in\calK$,
\item \label{item:Hessian_grad} $\lranglet{\nabla f(u)}{w} = -\lranglet{H(u)u}{w}$ $\quad \forall\,w\in\bbR^m$,
\item \label{item:grad_identity} $\lranglet{\nabla f(u)}{u} = -\theta$, and
\item \label{item:thetageone} $\theta \ge 1$. \qed
\end{enumerate}
\end{lemma}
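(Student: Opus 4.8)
The plan is to derive all four properties from the three defining properties \ref{item:third_second_bounded}--\ref{item:log_homogeneous}: I would extract \ref{item:Hessian_grad} and \ref{item:grad_identity} by differentiating the logarithmic-homogeneity identity \ref{item:log_homogeneous}, then obtain \ref{item:def_SCB} and \ref{item:thetageone} as algebraic consequences, and finally establish \ref{item:recession_cone} by a one-dimensional self-concordance argument along rays pointing into the cone.

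First I would differentiate $f(tu) = f(u) - \theta\ln t$ (property \ref{item:log_homogeneous}) in two ways. Taking the gradient in $u$ with $t$ held fixed and applying the chain rule gives $t\,\nabla f(tu) = \nabla f(u)$, so $\nabla f$ is homogeneous of degree $-1$; differentiating this relation once more in $t$ and setting $t=1$ yields $\nabla f(u) + H(u)u = 0$, which is exactly \ref{item:Hessian_grad}. Separately, differentiating $f(tu)=f(u)-\theta\ln t$ directly in $t$ and setting $t=1$ gives $\lranglet{\nabla f(u)}{u} = -\theta$, which is \ref{item:grad_identity}. Combining these two identities shows $\normt{u}_u^2 = \lranglet{u}{H(u)u} = -\lranglet{\nabla f(u)}{u} = \theta$, a fact I will reuse.

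Property \ref{item:def_SCB} then follows immediately: using \ref{item:Hessian_grad} to write $\lranglet{\nabla f(u)}{w} = -\lranglet{H(u)u}{w}$ and applying the Cauchy--Schwarz inequality in the inner product induced by $H(u)$ (positive definite since $f$ is strictly convex on $\inter\calK$) gives $\abs{\lranglet{\nabla f(u)}{w}} \le \normt{u}_u\normt{w}_u = \sqrt\theta\,\normt{w}_u$. Property \ref{item:thetageone} will drop out at the very end: applying \ref{item:recession_cone} with the admissible choice $v=u\in\calK$ gives $\sqrt\theta = \normt{u}_u \le -\lranglet{\nabla f(u)}{u} = \theta$, hence $\theta\ge 1$.

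The main obstacle is \ref{item:recession_cone}, the only property that genuinely uses the cone geometry and boundary behavior rather than mere algebra. For $v\in\inter\calK$ I would consider the ray $t\mapsto u+tv$, which stays in $\inter\calK$ for all $t\ge 0$, and set $\phi(t):=f(u+tv)$, so that $\phi''(t)=\normt{v}_{u+tv}^2$. The restriction of $f$ to this line is a one-dimensional self-concordant function by \ref{item:third_second_bounded}, giving $\abst{\phi'''}\le 2(\phi'')^{3/2}$; consequently $\tfrac{d}{dt}\big((\phi''(t))^{-1/2}\big)$ has absolute value at most $1$, which integrates to $\phi''(t)\ge (\normt{v}_u^{-1}+t)^{-2}$. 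Integrating this lower bound over $t\in[0,\infty)$ yields $\int_0^\infty \phi''(t)\,dt \ge \normt{v}_u$. On the other hand, $\int_0^\infty\phi''(t)\,dt = \lim_{t\to\infty}\lranglet{\nabla f(u+tv)}{v} - \lranglet{\nabla f(u)}{v}$, and the degree $-1$ homogeneity of $\nabla f$ established above forces the limiting term to vanish (writing $u+tv = t(v+u/t)$ and using continuity of $\nabla f$ at $v\in\inter\calK$). This gives $-\lranglet{\nabla f(u)}{v} \ge \normt{v}_u$ for $v\in\inter\calK$, and the inequality extends to all $v\in\calK$ by approximating $v$ with $v+\tfrac1n u\in\inter\calK$ and passing to the limit, using continuity of both sides in $v$. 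I expect the verification that the boundary term at $t\to\infty$ vanishes to be the most delicate point, since it is exactly where the logarithmic homogeneity (rather than plain self-concordance) is indispensable.
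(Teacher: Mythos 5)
Your proof is correct, but it is worth noting that the paper itself does not prove this lemma at all: it simply cites Nesterov and Nemirovskii (Corollary~2.3.1, Proposition~2.3.4, and Corollary~2.3.3 of their book), treating (P4)--(P8) as known facts about the class $\calB_\theta(\calK)$. What you have produced is a self-contained reconstruction of those cited results from the defining properties (P1)--(P3), and your route is essentially the standard textbook one: differentiating the homogeneity identity (P3) in $u$ and in $t$ to get (P6) and (P7), Cauchy--Schwarz in the $H(u)$-inner product for (P4) (note this works even if $H(u)$ is merely positive semidefinite, so strict convexity versus nondegeneracy is not an issue there), and the one-dimensional self-concordance bound $\phi''(t)\ge(\normt{v}_u^{-1}+t)^{-2}$ integrated along the ray $u+tv$, with the boundary term at infinity killed by the degree~$-1$ homogeneity of $\nabla f$, for (P5). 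You correctly identify that last step as the crux --- it is precisely where logarithmic homogeneity, and not plain self-concordance, enters --- and your limiting argument $\lranglet{\nabla f(u+Tv)}{v}=T^{-1}\lranglet{\nabla f(v+u/T)}{v}\to 0$ is valid since $v+u/T\to v\in\inter\calK$ and $\nabla f$ is continuous there.

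Two small points deserve tightening. First, in deducing (P8) from $\sqrt{\theta}=\normt{u}_u\le-\lranglet{\nabla f(u)}{u}=\theta$, you need $\theta>0$ before concluding $\theta\ge 1$; this follows because $\theta=\normt{u}_u^2=0$ would force $\nabla f(u)=-H(u)u=0$ for every $u\in\inter\calK$, making $f$ constant and contradicting the barrier property (P2) (or strict convexity). Second, the manipulation $\abs{\tfrac{d}{dt}(\phi''(t))^{-1/2}}\le 1$ implicitly assumes $\phi''>0$ along the ray; if $\phi''(t_0)=0$ at some point, self-concordance forces $\phi''\equiv 0$ on the ray, and then your integral representation still gives $-\lranglet{\nabla f(u)}{v}=\int_0^\infty\phi''(t)\,dt= 0=\normt{v}_u$, so (P5) holds in that degenerate case too. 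With these two remarks added, your argument is a complete and correct proof of everything the paper imports by citation.
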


We also introduce some properties of the function $\omega^*$ (cf.~\eqref{eq:omegas}) and present an ``old'' property of the logarithm function. 
\begin{prop}\label{lem:omega_conj}
The function $\omega^*$ is strictly increasing on $[0,+\infty)$, and 
\begin{align}
\omega^*(s)&\ge s^2/3 \quad\;\; \ \forall s \in [0,1/2] \ , \ \mbox{and} \label{eq:omega*_quad}\\
\omega^*(s)&\ge s/5.3 \quad\;\;  \forall s \ge 1/2\ .  \label{eq:omega*_linear}
\end{align}
\end{prop}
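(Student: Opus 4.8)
The plan is to reduce all three claims to elementary one-variable calculus on $\omega^*(s)=s-\ln(1+s)$ (cf.~\eqref{eq:omegas}), whose derivatives are explicit:
\[
(\omega^*)'(s)=1-\frac{1}{1+s}=\frac{s}{1+s},\qquad (\omega^*)''(s)=\frac{1}{(1+s)^2}>0 .
\]
The first formula settles the monotonicity claim immediately: $(\omega^*)'(s)=s/(1+s)>0$ for every $s>0$, so $\omega^*$ is strictly increasing on $[0,+\infty)$. The convexity recorded by $(\omega^*)''>0$ is useful for intuition but will not be needed directly.

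For the quadratic lower bound \eqref{eq:omega*_quad} on $[0,1/2]$, I would introduce the difference $g(s):=\omega^*(s)-s^2/3$ and show $g\ge 0$ there. Since $g(0)=0$, it suffices to show $g$ is nondecreasing on $[0,1/2]$. Differentiating and clearing denominators gives
\[
g'(s)=\frac{s}{1+s}-\frac{2s}{3}=s\cdot\frac{3-2(1+s)}{3(1+s)}=\frac{s(1-2s)}{3(1+s)},
\]
which is manifestly nonnegative on $[0,1/2]$ because each factor is nonnegative there. Hence $g(s)\ge g(0)=0$, proving \eqref{eq:omega*_quad}.

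For the linear lower bound \eqref{eq:omega*_linear} on $[1/2,+\infty)$, I would analogously set $\psi(s):=\omega^*(s)-s/5.3$. Here $\psi'(s)=\frac{s}{1+s}-\frac{1}{5.3}$ is nonnegative exactly when $5.3\,s\ge 1+s$, i.e.\ $s\ge 1/4.3$; in particular $\psi$ is increasing on $[1/2,+\infty)$ since $1/2>1/4.3$. Consequently it is enough to verify the single boundary value $\psi(1/2)=\tfrac12-\ln\tfrac32-\tfrac{1}{10.6}\ge 0$, i.e.\ $\ln\tfrac32\le \tfrac{1}{2}-\tfrac{1}{10.6}=\tfrac{43}{106}$.

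This endpoint check is the one genuinely delicate point, and I expect it to be the main obstacle: the constant $5.3$ is essentially tight, since $\tfrac12/\omega^*(\tfrac12)=0.5/(0.5-\ln\tfrac32)\approx 5.289$, so $\psi(1/2)\ge 0$ holds only by a razor-thin margin (about $2\times 10^{-4}$). I would therefore not estimate $\ln\tfrac32$ loosely but certify it with a sharp rational bound, e.g.\ via $\ln\tfrac32=\sum_{k\ge1}\tfrac{1}{k\,3^k}$: summing the first few terms and bounding the remaining tail by a geometric series yields $\ln\tfrac32\le \tfrac{43}{106}$ with room to spare. Once $\psi(1/2)\ge 0$ is established, the monotonicity of $\psi$ on $[1/2,+\infty)$ delivers \eqref{eq:omega*_linear}, completing the proof.
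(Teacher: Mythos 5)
Your proposal is correct and takes essentially the same route as the paper's proof in Appendix~B: both parts reduce to elementary one-variable calculus on $\omega^*(s)=s-\ln(1+s)$ (the paper integrates $(\omega^*)'(t)\ge \tfrac23 t$ for the quadratic bound, and uses convexity to place $\omega^*$ above the secant line $l(s)=(1-2\ln 1.5)s$ for $s\ge 1/2$, whereas you show the differences $\omega^*(s)-s^2/3$ and $\omega^*(s)-s/5.3$ have nonnegative derivative on the relevant intervals --- equivalent computations), and both linear bounds hinge on the identical near-tight endpoint inequality $\ln(3/2)\le 1/2-1/10.6=43/106$. Your explicit certification of that inequality via $\ln(3/2)=\sum_{k\ge1}\frac{1}{k\,3^k}$ with a geometric tail bound is a nice touch, since the paper asserts it without verification.
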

\begin{proof}
See Appendix~\ref{app:proof_omega_conj}. 
\end{proof}

\begin{prop}\label{lem:Karmarkar}
\begin{equation}
\ln(1+s)\ge s - \frac{s^2}{2(1-\abs{s})} \quad\forall\ s\in(-1,1) \ . 
\end{equation}
\end{prop}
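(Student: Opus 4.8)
The plan is to prove the inequality directly from the Maclaurin series of the logarithm, which converges on the whole interval $(-1,1)$. Recall that for every $s\in(-1,1)$ we have $\ln(1+s)=\sum_{k=1}^\infty \frac{(-1)^{k-1}}{k}s^k$, and since the $k=1$ term is exactly $s$, subtracting it isolates the higher-order remainder:
\[
\ln(1+s)-s=\sum_{k=2}^\infty \frac{(-1)^{k-1}}{k}s^k \ .
\]
The key observation is that the target right-hand side $-\tfrac{s^2}{2(1-\abs{s})}$ is precisely the worst-case lower bound on this remainder, so it suffices to control the remainder in absolute value.

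Next I would bound the remainder by replacing each coefficient $\tfrac1k$ (with $k\ge 2$) by its largest value $\tfrac12$, and each signed power $s^k$ by $\abs{s}^k$, collapsing the result to a geometric series:
\[
\Big|\ln(1+s)-s\Big|\ \le\ \sum_{k=2}^\infty \frac{\abs{s}^k}{k}\ \le\ \frac12\sum_{k=2}^\infty \abs{s}^k\ =\ \frac12\cdot\frac{\abs{s}^2}{1-\abs{s}}\ =\ \frac{s^2}{2(1-\abs{s})}\ ,
\]
where the geometric sum converges because $\abs{s}<1$. Since any real quantity is at least the negative of its own absolute value, this immediately gives $\ln(1+s)-s\ge -\tfrac{s^2}{2(1-\abs{s})}$, which is exactly the claimed inequality.

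There is no genuine obstacle here, as this is an elementary term-by-term estimate; the only point deserving a little care is uniformity in the sign of $s$. Both the series representation and the geometric identity $\sum_{k\ge 2}\abs{s}^k=\tfrac{\abs{s}^2}{1-\abs{s}}$ hold verbatim for negative $s$, so the single absolute-value argument covers $s\in[0,1)$ and $s\in(-1,0)$ simultaneously without a case split. (One could instead set $g(s):=\ln(1+s)-s+\tfrac{s^2}{2(1-\abs{s})}$ and verify $g\ge 0$ by differentiation, but the nonsmoothness of $\abs{s}$ at the origin would force exactly such a two-case analysis, so the series route is both shorter and cleaner.)
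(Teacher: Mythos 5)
Your proof is correct and takes essentially the same route as the paper: both expand $\ln(1+s)$ in its Maclaurin series, bound the tail term-by-term by replacing $\tfrac1k$ with $\tfrac12$ and $s^k$ with $\abs{s}^k$, and sum the resulting geometric series. The only cosmetic difference is that you bound the absolute value of the remainder and then negate, whereas the paper bounds the series from below directly.
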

\begin{proof}
See Appendix~\ref{app:proof_Karmarkar}. 
\end{proof}

We have the following inequality concerning values of $D_k$ and $G_k$ computed in Step \ref{item:step_size_SC} of Algorithm \ref{algo:FW_SC}. For convenience, in the following, define 
$$\tilG_k := \lranglet{\nabla f(\rvA x^k)}{\rvA(x^k - v^k)} \quad \mbox{and}\quad  \beta_k := h(x^k) - h(v^k) \ ,$$
 so that $G_k  = \tilG_k + \beta_k$. Also, by the definition of $\rh$, we know that $\abst{\beta_k}\le \rh$.

\begin{prop}\label{keyinequality} For all $k \ge 0$ it holds that \begin{equation}\label{eq:bound_Dk}
D_k \le G_k +\theta+ \rh \ . \end{equation}
\end{prop}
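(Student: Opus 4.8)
The plan is to bound $D_k$ by expanding the squared local norm at $u := \rvA x^k$ and invoking the structural identities for $\theta$-logarithmically-homogeneous barriers collected in Lemma~\ref{lem:LHSCB}. Write $w := \rvA v^k$; since $v^k\in\calX$ and $\rvA(\calX)\subseteq\calK$ (as holds in all the applications of Section~\ref{sec:applications}), we have $w\in\calK$, which is what lets us apply the recession-cone bound~\ref{item:recession_cone}. Introduce the shorthand $g := -\lranglet{\nabla f(u)}{w}$, so that~\ref{item:recession_cone} gives $\normt{w}_u\le g$, and in particular $g\ge 0$. I would first record two exact evaluations: combining~\ref{item:grad_identity} with~\ref{item:Hessian_grad} yields $\normt{u}_u^2 = \lranglet{u}{H(u)u} = -\lranglet{\nabla f(u)}{u} = \theta$, while~\ref{item:Hessian_grad} also gives $\lranglet{w}{H(u)u} = -\lranglet{\nabla f(u)}{w} = g$.

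Next I would expand, using symmetry of $H(u)$ together with $\normt{w}_u^2\le g^2$ and the two evaluations above,
\begin{equation*}
D_k^2 = \normt{w-u}_u^2 = \normt{w}_u^2 - 2\lranglet{w}{H(u)u} + \normt{u}_u^2 \le g^2 - 2g + \theta \ .
\end{equation*}
The second ingredient relates $g$ to the quantities computed by the algorithm: since $\tilG_k = \lranglet{\nabla f(u)}{u-w} = -\theta + g$ by~\ref{item:grad_identity}, we have $g = \tilG_k + \theta$. Setting $M := G_k + \theta + \rh$, it follows that $M - g = (G_k - \tilG_k) + \rh = \beta_k + \rh \ge 0$ (because $\beta_k\ge -\rh$), so $M\ge g$; and $M\ge\theta$ since $G_k\ge 0$ and $\rh\ge 0$. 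Hence $M\ge\max\{g,\theta\}$.

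It therefore suffices to establish the elementary scalar inequality $g^2 - 2g + \theta \le \max\{g,\theta\}^2$, for then $D_k^2 \le \max\{g,\theta\}^2 \le M^2$ and the conclusion $D_k\le M$ follows since both sides are nonnegative. This is where the hypothesis $\theta\ge 1$ (property~\ref{item:thetageone}) enters, and it is the only genuinely delicate point; I would dispatch it by two cases. If $g\ge\theta$ then $\max\{g,\theta\}^2 = g^2$ and the claim reduces to $\theta\le 2g$, which holds since $g\ge\theta\ge\theta/2$. If $g<\theta$ then $\max\{g,\theta\}^2 = \theta^2$ and the claim reads $g^2 - 2g + \theta - \theta^2 \le 0$; viewing the left side as a quadratic in $g$ with roots $1\pm\sqrt{\theta^2-\theta+1}$, one checks (using $\theta\ge 1$) that the lower root is $\le 0$ and the upper root is $\ge\theta$, so $[0,\theta]$ lies between the roots and the quadratic is nonpositive there.

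The main obstacle, as flagged, is this final scalar estimate: the naive relaxations—either the triangle inequality $D_k \le \normt{w}_u + \normt{u}_u = g + \sqrt\theta$, or discarding the cross term to get $D_k\le g+\theta$—each lose an additive $\Theta(\theta)$ (resp.\ $\Theta(\sqrt\theta)$) and would only yield $D_k\le G_k + 2\theta + \rh$. Obtaining the exact additive constant $\theta$ requires retaining the $-2g$ term in the expansion and comparing $g^2-2g+\theta$ against $\max\{g,\theta\}^2$ rather than against $(g+\theta)^2$. A secondary point worth stating explicitly is the membership $w=\rvA v^k\in\calK$, on which the use of~\ref{item:recession_cone} (and hence $g\ge 0$) depends.
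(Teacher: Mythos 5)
Your proof is correct and takes essentially the same route as the paper's: both expand $D_k^2 = \normt{\rvA v^k - \rvA x^k}_{\rvA x^k}^2$ into three terms and apply properties \ref{item:recession_cone}--\ref{item:grad_identity} to arrive at the identical intermediate bound $D_k^2 \le (\tilG_k+\theta)^2 - 2\tilG_k - \theta$ (your $g^2 - 2g + \theta$ with $g = \tilG_k+\theta$), with both arguments also implicitly requiring $\rvA v^k \in \calK$ in order to invoke \ref{item:recession_cone}. The only difference is the closing algebra: the paper substitutes $\tilG_k = G_k - \beta_k$ and bounds term by term using $\abst{\beta_k}\le \rh$ and $G_k + \theta \ge 1$, whereas you pass through the scalar inequality $g^2 - 2g + \theta \le \max\{g,\theta\}^2$ (likewise relying on $\theta \ge 1$) and then use $\max\{g,\theta\}\le G_k+\theta+\rh$ --- a cosmetic rather than substantive distinction.
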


\begin{proof}  We have:
\begin{align}
D_k^2 &= \lranglet{H(\rvA x^k)\rvA v^k}{\rvA v^k} - 2\lranglet{H(\rvA x^k)\rvA x^k}{\rvA v^k} + \lranglet{H(\rvA x^k)\rvA x^k}{\rvA x^k} \ .\label{eq:bound_Dk_0}
\end{align}
By~\ref{item:recession_cone} we see that 
\begin{equation}\label{eq:bound_Dk_1}
\lranglet{H(\rvA x^k)\rvA v^k}{\rvA v^k} \le \lranglet{\nabla f(\rvA x^k)}{\rvA v^k}^2 = (-\tilG_k + \lranglet{\nabla f(\rvA x^k)}{\rvA x^k})^2 = (\tilG_k + \theta)^2 \ , 
\end{equation}
where the last equality above uses~\ref{item:grad_identity}. 
In addition, from~\ref{item:Hessian_grad} and~\ref{item:grad_identity} we have 
\begin{align}
-2\lranglet{H(\rvA x^k)\rvA x^k}{\rvA v^k} + \lranglet{H(\rvA x^k)\rvA x^k}{\rvA x^k} &= 2\lranglet{\nabla f(\rvA x^k)}{\rvA v^k} - \lranglet{\nabla f(\rvA x^k)}{\rvA x^k}\nn\\
&= -2\tilG_k  + \lranglet{\nabla f(\rvA x^k)}{\rvA x^k}\nn\\
&= -2\tilG_k - \theta \ .
\label{eq:bound_Dk_2}
\end{align}
Combining~\eqref{eq:bound_Dk_0},~\eqref{eq:bound_Dk_1} and~\eqref{eq:bound_Dk_2}, we have 
\begin{align*}
D_k^2 &\le (\tilG_k + \theta)^2 -2\tilG_k - \theta\\
& = (G_k -\beta_k + \theta)^2 + 2\beta_k - (2G_k + \theta)\\
&\le (G_k + \theta)^2 + \beta_k^2 - 2\beta_k (G_k + \theta-1)\nt\label{eq:Dk_1} \\
&\le (G_k + \theta)^2 + \rh^2 + 2\rh (G_k + \theta)\nt\label{eq:Dk_2}\\
&= (G_k + \theta + \rh)^2 \ , \nt\label{eq:Dk_3}
\end{align*}
where in~\eqref{eq:Dk_1} we use $2G_k + \theta\ge 0$ and in~\eqref{eq:Dk_2} we use $\abst{\beta_k}\le \rh$ and  $G_k + \theta\ge \theta\ge 1$. 
\end{proof}

The basic iteration improvement inequality for the Frank-Wolfe was presented in \eqref{eq:curvature_SC_algo}, and the step-size in Algorithm \ref{algo:FW_SC} is given by \eqref{eq:step_size_SC}.  
 In the case when $\alpha_k < 1$, it follows from substituting $\alpha_k = \tfrac{G_k}{D_k(G_k+D_k)}$ from \eqref{eq:step_size_SC} into \eqref{eq:ub_F} that the iteration improvement bound is 
\begin{equation}
F(x^{k+1}) \le F(x^k) -\omega^*\left(\frac{G_k}{D_k}\right). 
\end{equation}
Using the notation $\Delta_k := F(x^k) - F(x^{k+1}) = \delta_k-\delta_{k+1}$, we can write this improvement as:
\begin{align}
\Delta_k =  \delta_{k} - \delta_{k+1} = F(x^k) - F(x^{k+1}) &\ge  \omega^*\left(\frac{G_k}{D_k}\right)\ge 0 \ \mbox{when} \ \alpha_k \ < 1 \ . \label{eq:improv_SC}
\end{align}
Let us now prove \eqref{eq:rmflin_conv_SC} and part~\ref{item:K_lin} of Theorem \ref{thm:SC}.  Since $G_k > \theta + \rh$, by~\ref{item:def_SCB} in Lemma~\ref{lem:LHSCB} we have 
\begin{align*}
D_k  = \normt{\rvA v^k - \rvA x^k}_{\rvA x^k} & \ge  \abst{\lranglet{\nabla f(\rvA x^k)}{\rvA v^k-\rvA x^k}}/\sqrt{\theta} \\
&\ge   \tilG_k/\sqrt{\theta}   = (G_k-\beta_k)/\sqrt{\theta}\ge (G_k-\rh)/\sqrt{\theta} > \sqrt{\theta}\ge 1 \ . \nt \label{eq:D_k_ge_G_k}
\end{align*}
As a result,  $\alpha_k = \tfrac{G_k}{D_k(G_k+D_k)}<1$.  
Consequently, by~\eqref{eq:improv_SC} we have
\begin{equation}
F(x^{k+1})\le F(x^k) - \omega^*\left(\frac{G_k}{D_k}\right) \le F(x^k) - \omega^*\left(\frac{G_k}{G_k + \theta +\rh} \right) \ , \label{eq:1st_phase_descent}
\end{equation} 
where the last inequality uses \eqref{eq:bound_Dk} and the monotonicity of $\omega^*$.  Now notice from the condition $G_k > \theta + \rh$ that ${G_k}/(G_k+\theta+ \rh)>1/2$, whereby invoking \eqref{eq:omega*_linear} we have
\begin{align}
\Delta_k = \delta_k - \delta_{k+1} = F(x^{k}) - F(x^{k+1})&\ge  \omega^*\left(\frac{G_k}{G_k+\theta+\rh}\right) \ge \frac{G_k}{5.3(G_k+\theta+\rh)} \ . \label{eq:improv_SC_lin}
\end{align} 
In addition we have
\begin{equation}
\frac{G_k}{5.3(G_k+\theta+ \rh)}\ge \frac{\delta_k}{5.3(\delta_k+\theta+ \rh)}\ge \frac{\delta_k}{5.3(\delta_0+\theta+ \rh)} \ , \label{eq:improv_SC2}
\end{equation} 
where the first inequality uses the strict monotonicity of the function $c \mapsto c/(c+\theta+ \rh)$ on $[0,+\infty)$, and the second inequality uses the monotonicity of the sequence $\{\delta_k\}_{k\ge 0}$ (see~\eqref{eq:improv_SC}). 
Combining~\eqref{eq:improv_SC_lin} and~\eqref{eq:improv_SC2}, we obtain 
\begin{equation}
\delta_{k+1}\le \left(1-\frac{1}{5.3(\delta_0+\theta+ \rh)}\right)\delta_k \ , \label{eq:lin_conv_SC}
\end{equation}
which proves \eqref{eq:rmflin_conv_SC}.  Furthermore, $G_k>\theta+ \rh$ implies that 
\begin{equation}
\delta_k\ge \Delta_k \ge \frac{G_k}{5.3(G_k+\theta+ \rh)} > \frac{G_k}{5.3(G_k+G_k)} = \frac{1}{10.6} \ .  \label{holymoly}
\end{equation}
Now let $K_{\mathrm{Lin}}$ denote the number of iterations of Algorithm \ref{algo:FW_SC} where $G_k > \theta+\rh$ occurs.  By~\eqref{eq:lin_conv_SC} and \eqref{holymoly} it follows that 
\begin{align}
\frac{1}{10.6} < \delta_0  \left(1-\frac{1}{5.3(\delta_0+\theta+\rh)}\right)^{K_{\mathrm{Lin}}-1} \ , 
\end{align}
which then implies that $K_{\mathrm{Lin}} \le \lceil 5.3(\delta_0 + \theta+\rh)\ln(10.6\delta_0) \rceil$ and thus proving part~\ref{item:K_lin} of Theorem \ref{thm:SC}.

Let us now prove \eqref{eq:sublin_conv_SC} of Theorem \ref{thm:SC}.  Towards doing so, we will establish:
\begin{equation}\label{acorns}
G_k \le \theta +\rh \ \implies \   \Delta_k \ge \frac{G_k^2}{12(\theta+\rh)^2}  \ . 
\end{equation} 
We first consider the case where $\alpha_k=1$, whereby $D_k(G_k+D_k) \le G_k$, which implies that $D_k < 1$, and also can be rearranged to yield 
\begin{equation} 
G_k \ge  \frac{D_k^2}{1-D_k} \ .\label{eq:G_k_D_k}
\end{equation}
In addition, by~\eqref{eq:ub_F} we obtain
\begin{align*}
\Delta_k = F(x^k)-F(x^{k+1}) \ge G_k - \omega(D_k)= G_k +D_k + \ln(1-D_k) \ .\nt \label{eq:Delta_lb_alpha_1}
\end{align*}
By~\eqref{eq:Delta_lb_alpha_1}, Proposition~\ref{lem:Karmarkar}, and~\eqref{eq:G_k_D_k}, we have
\begin{equation}
\Delta_k \ge G_k - \frac{D_k^2}{2(1-D_k)} \ge \frac{G_k}{2} \ ,  \label{eq:Delta_k_lb_lin}
\end{equation} which then implies 
that
\begin{equation}
\Delta_k \ge \frac{G_k}{2} \ge \frac{G_k^2}{2(\theta+\rh)} \ge \frac{G_k^2}{2(\theta+\rh)^2} \ge \frac{G_k^2}{12(\theta+\rh)^2} \ , 
\end{equation}
where the second inequality used $G_k \le \theta+\rh$ and the third inequality used $\theta+\rh\ge 1$.  This establishes \eqref{acorns} for the case when $\alpha_k = 1$.

We next consider the case where $\alpha_k <1$, whereby $\alpha_k = \frac{G_k}{D_k(G_k+D_k)}$, and then by \eqref{eq:improv_SC} we have 
\begin{align*}
\Delta_k &= F(x^k) - F(x^{k+1}) \\
&\ge \omega^*\left(\frac{G_k}{D_k}\right) \ge \omega^*\left(\frac{G_k}{G_k + \theta+\rh}\right) \ge \frac{G_k^2}{3(G_k+\theta+\rh)^2}\ge \frac{G_k^2}{12(\theta+\rh)^2} \ , \nt\label{eq:Delta_k_lb_quasi-quad}
\end{align*}
where the second inequality uses \eqref{eq:bound_Dk} and the monotonicity of $\omega^*$, the third inequality 
uses \eqref{eq:omega*_quad} in conjunction with $G_k/(G_k + \theta+\rh) \le 1/2$, and the fourth inequality uses $G_k \le \theta+\rh$. This establishes \eqref{acorns} for the case when $\alpha_k < 1$, completing the proof of \eqref{acorns}.  It thus follows for $G_k \le \theta+\rh$ that
$$ \delta_k - \delta_{k+1} = \Delta_k \ge \frac{G_k^2}{12(\theta+\rh)^2} \ge \frac{\delta_k \delta_{k+1}}{12(\theta+\rh)^2} \ , $$
where the last inequality follows from $\delta_{k+1} \le  \delta_k \le G_k$, and dividing both sides by $ \delta_k \delta_{k+1}$ and rearranging yields the inequality \eqref{eq:sublin_conv_SC}.  

We next prove \eqref{rob6miles} and \eqref{rob7miles}.  If $\delta_0 \le \varepsilon$ the result follows trivially; thus we assume that $\delta_0 > \varepsilon$. Let $\bar K$ denote the expression on the right-side of \eqref{rob6miles}, and suppose Algorithm \ref{algo:FW_SC} has been run for $\bar K$ iterations. Let $N:=  \lceil12(\theta+\rh)^2\left({1}/{\varepsilon} - {1}/{\delta_0} \right)\rceil$, whereby it follows from part~\ref{item:K_lin} 
of Theorem~\ref{thm:SC} that among the first $\bar K$ iterations, the number of iterations  where $G_k \le \theta+\rh$ is at least $N$.  Thus from \eqref{eq:sublin_conv_SC} it follows that 
$$\frac{1}{\delta_{K_\varepsilon}} \ge \frac{1}{\delta_0} + \frac{N}{12 (\theta+\rh)^2} \ge \frac{1}{\delta_0} + \left(\frac{1}{\varepsilon} - \frac{1}{\delta_0} \right) = \frac{1}{\varepsilon} \ , $$ and rearranging yields part~\ref{item:K_eps} of Theorem~\ref{thm:SC}.

Let $k_0 < k_1 < k_2 < \cdots$ denote indices where $G_k \le \theta + \rh$. 
From \eqref{acorns}, \eqref{squirrels}, and the monotonicity of the sequence $\{\delta_k\}_{k\ge 0}$, it follows for all $j\ge 0$ that 
$$ \delta_{k_{j+1}} \le \delta_{k_j+1} \le \delta_{k_{j}} - \frac{G_{k_{j}}^2}{12 (\theta+\rh)^2} \ \ \ \mbox{and} \ \ \ G_{k_{j}} \ge \delta_{k_{j}} \ . $$
Let $d_j := \delta_{k_{j}}$ and $g_j := G_{k_{j}}$ for all $j\ge 0$, then the nonnegative sequences $\{ d_j \}_{j\ge 0}$ and $\{ g_j \}_{j\ge 0}$ satisfy for all $j\ge 0$: 
$$ d_{{j+1}} \le d_{{j}} - \frac{g_{{j}}^2}{12 (\theta+\rh)^2} \ \ \ \mbox{and} \ \ \ g_{{j}} \ge d_{{j}} \ . $$
Thus $\{ d_j \}_{j\ge 0}$ and $\{ g_j \}_{j\ge 0}$ satisfy the hypotheses of the following elementary sequence proposition using $M=12(\theta+\rh)^2$. (This proposition is 
a slight extension of the standard sequence property for Frank-Wolfe type sequences, and we provide a proof in Appendix \ref{holdenwood}.)

\begin{prop}\label{greatrun} Suppose the two nonnegative sequences $\{ d_j \}_{j\ge 0}$ and $\{ g_j \}_{j\ge 0}$ satisfy for all $j \ge 0$:
\begin{itemize}
\item $d_{j+1} \le d_j - g_j^2/M$ for some $M>0$, and
\item $g_j \ge d_j$.
\end{itemize}
Then for all $j \ge 0$ the following holds:
\begin{equation}\label{eq:rate_d_j} d_{j} \le \frac{M}{j + \frac{M}{d_0}} < \frac{M}{j} \ , \end{equation}
and \begin{equation}\label{eq:rate_g_j} \min\{g_0, \ldots, g_j\}  < \frac{2M}{j} \ . \end{equation} \qed

\end{prop}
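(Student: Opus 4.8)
The plan is to reduce the two-sequence hypothesis to the single standard Frank--Wolfe recursion and then run two telescoping arguments, one for \eqref{eq:rate_d_j} and one for \eqref{eq:rate_g_j}. Since $g_j \ge d_j \ge 0$ we have $g_j^2 \ge d_j^2$, so the descent hypothesis $d_{j+1} \le d_j - g_j^2/M$ yields both
$$ d_{j+1} \le d_j - \frac{d_j^2}{M} \quad \text{and} \quad d_{j+1} \le d_j \ . $$
If $d_{j_0} = 0$ for some $j_0$, the recursion forces $g_{j_0} = 0$ and $d_j = 0$ for all $j \ge j_0$, and both claimed bounds then hold trivially; hence I may assume $d_j > 0$ for every $j$.

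For \eqref{eq:rate_d_j} I would pass to reciprocals and estimate
$$ \frac{1}{d_{j+1}} - \frac{1}{d_j} = \frac{d_j - d_{j+1}}{d_j d_{j+1}} \ge \frac{d_j^2/M}{d_j d_{j+1}} = \frac{d_j}{M d_{j+1}} \ge \frac{1}{M} \ , $$
where the first inequality uses $d_j - d_{j+1} \ge d_j^2/M$ and the last uses $d_{j+1} \le d_j$. Telescoping from $0$ to $j-1$ gives $1/d_j \ge 1/d_0 + j/M = (j + M/d_0)/M$, and inverting yields exactly $d_j \le M/(j + M/d_0)$; the strict bound $d_j < M/j$ follows for $j \ge 1$ because $M/d_0 > 0$.

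For \eqref{eq:rate_g_j} I would sum the descent inequality over a tail window instead of the whole range. Fixing an index $t$ with $1 \le t \le j$ and summing $g_i^2/M \le d_i - d_{i+1}$ over $i = t, \ldots, j$ telescopes the right-hand side to $d_t - d_{j+1} \le d_t$. Writing $m_j := \min\{g_0, \ldots, g_j\}$ and bounding each of the $j-t+1$ summands below by $m_j^2$ gives $(j-t+1)\,m_j^2/M \le d_t$, into which I feed the Part-1 estimate $d_t < M/t$ to obtain $m_j^2 < M^2/\big(t(j-t+1)\big)$. Choosing $t = \lceil j/2 \rceil$ and checking the two parity cases shows $t(j-t+1) > j^2/4$, whence $m_j^2 < 4M^2/j^2$ and $m_j < 2M/j$.

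I expect the last step to be the only real obstacle. Summing over the full range $i = 0, \ldots, j$ would only give $\sum g_i^2 \le M d_0$ and hence the weaker $m_j = O(M/\sqrt{j})$; the point is to discard the early iterations and sum over a tail $[t,j]$ on which $d_t$ is already of order $M/t$. Extracting the clean constant $2$ then hinges on the balanced split $t \approx j/2$ together with the elementary inequality $t(j-t+1) > j^2/4$ at $t = \lceil j/2\rceil$, which is where a parity case-check is genuinely needed and where an off-by-one in the window endpoints would degrade the constant.
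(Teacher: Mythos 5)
Your proof is correct and follows essentially the same route as the paper's: the reciprocal telescoping $1/d_{j+1} \ge 1/d_j + 1/M$ (via $g_j \ge d_j$ and monotonicity of $\{d_j\}$) for \eqref{eq:rate_d_j}, and the tail-window summation over $i = t,\ldots,j$ with $t = \lceil j/2 \rceil$ combined with $d_t < M/t$ for \eqref{eq:rate_g_j}. The only (harmless) addition is your explicit handling of the degenerate case $d_{j_0}=0$, which the paper leaves implicit.
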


\noindent 
Let $\mathrm{FWGAP}_\varepsilon$ be as given in part~\ref{item:FWGAP} of Theorem~\ref{thm:SC}, and let $\tilde K$ denote the expression on the right-side of \eqref{rob7miles}.  Suppose Algorithm \ref{algo:FW_SC} has been run for $\tilde K$ iterations. Let $\tilde N:=  \lceil {24(\theta+\rh)^2}/{\varepsilon} \rceil$, whereby it follows from part~\ref{item:K_lin} of Theorem~\ref{thm:SC} that among the first $\tilde K$ iterations, 
the number of iterations where $G_k \le \theta+\rh$ is at least $\tilde N$. Then, it follows that
$$ \min\{G_0, \ldots, G_{\tilde K}\} \le \min\{G_{k_{0}}, \ldots, G_{k_{\tilde N}}\} = \min\{g_0, \ldots, g_{\tilde N}\} < \frac{2M}{\tilde N} = \frac{24(\theta+\rh)^2}{\tilde N} \le \varepsilon \ , $$ where the strict inequality uses Proposition \ref{greatrun}.  This shows \eqref{rob7miles} and completes the proof of Theorem~\ref{thm:SC}. \qed

{\begin{remark}[Continued discussion from Remark \ref{autumn} comparing Theorem \ref{thm:SC} with Khachiyan~\cite{khachiyan1996rounding} for $D$-optimal design]\label{stormymonday} 
Here $h = \iota_{\Delta_n}$, whereby $\rh = 0$, and the rate of linear convergence for iterates where $G_k > \theta$ in \eqref{eq:rmflin_conv_SC} is order $O(1-1/(n+\delta_0))$ as compared to the rate of $O(1-1/n)$ proved in~\cite{khachiyan1996rounding} specifically for the $D$-optimal design problem  with exact line-search. 
Due to the very special structure of the $D$-optimal design problem, the exact line-search is in closed-form, and it enables Khachiyan~\cite{khachiyan1996rounding} to show that the optimality gap improvement bound \eqref{eq:improv_SC} is instead
\begin{equation}
\delta_{k+1} \le  \delta_{k}  -  \omega\left(\frac{G_k}{G_k+\theta}\right) \ . \label{eq:improved_lin_rate}
\end{equation}
Notice that $\omega$ is larger than $\omega^*$, and all the moreso for larger values of its argument, which corresponds to $G_k > \theta$; this then leads to an improved guaranteed linear rate of Khachiyan's algorithm in the case when $G_k > \theta$. However, we stress that the stronger estimate in~\eqref{eq:improved_lin_rate} is rather specific to the $D$-optimal design problem, and we do not expect it to hold in general for $f\in\calB_\theta(\calK)$. 
\end{remark}
}

\section{A Mirror Descent Method for the Dual Problem}\label{doodle}


In this section we present a mirror descent method with adaptive step-size applied to the (Fenchel) dual problem of $\poi$. 
We denote the dual problem of $\poi$ by $\doi$, which is given by:
\begin{equation}
\doi:\ \ \ -d^*:= -{\min}_{y\in\bbR^m} \;[d(y):= f^*(y) + h^*(-\rvA^*y)] \ , \label{doi}
\end{equation}
where $f^*$ and $h^*$ are the Fenchel conjugates of $f$ and $h$, respectively, and $\rvA^*:\bbR^{m}\to \bbR^{n}$ denotes the adjoint of $\rvA$.  We observe the following properties related to $\doi$: 
\begin{enumerate}
\item $f^*$ is a $\theta$-logarithmically-homogeneous self-concordant barrier on the polar of ${\calK}$, namely ${\calK}^\circ:= \{y\in\bbR^m:\lranglet{y}{u}\le 0\;\forall\,u\in{\calK}\}$, and $\calK^\circ$ is also a regular cone.  This follows from~\cite[Theorem~2.4.4]{Nest_94}.
\item $h^*$ is Lipschitz (but not necessarily differentiable) on $\bbR^n$. Indeed, let $\normt{\cdot}$ be a given norm on the space of $x$ variables, 
and define $R_\calX:= \max_{x \in \calX} \|x\|$. 
Since $\calX=\dom h$ is compact and $h$ is closed, 
it follows that $R_\calX<+\infty$  and $h^*$ is $R_\calX$-Lipschitz on $\bbR^n$. 
\item $F^*= -d^*$ and $\doi$ has at least one optimal solution.  Indeed, since $\rvA(\calX)\cap\dom f\ne \emptyset$ and $f$ is continuous on $\dom f$, the strong duality and attainment follows from~\cite[Theorem~3.51]{Peyp_15}.
\end{enumerate}
Although $\doi$ has a similar structure as $\poi$, certain key differences are unattractive in regards to the application of first-order methods for solving $\doi$.  One key difference is that the domain of the dual function $d$ is unbounded, which is in obvious contrast to the bounded domain of the primal function $F$.  This makes it difficult or prohibitive to apply a Frank-Wolfe type method to solve $\doi$. Furthermore, and similar to $\poi$, $\nabla f^*$ does not satisfy either uniform boundedness or uniform Lipschitz continuity on ${\calK}^\circ$, thereby preventing the application of most other types of first-order methods.  Nevertheless, below we present a mirror descent method with adaptive step-size for $\doi$.  Although the lack of good properties prevents the direct analysis of mirror descent in the usual manner, through the duality of mirror descent and the Frank-Wolfe method we provide a computational complexity bound for this mirror descent method. 

Before presenting our mirror descent method for tackling $\doi$, we first present an important application of $\doi$ in the Bregman proximal-point method.

{\em Application in the Bregman proximal-point method (BPPM)~\cite{Censor_92,Ecks_93,Aus_99}.}\; Consider the convex non-smooth optimization problem $\min_{y\in\bbR_+^m}\, \xi(y)$, where $\xi:\bbR^m\to\bbR$ is assumed to be Lipschitz on $\bbR^m$. At the $k$-th iteration of BPPM, one solves  the following problem:
\begin{equation}
y^{k+1}:= {\argmin}_{y\in\bbR^m}\; \xi(y) + \beta_k^{-1} D_\zeta(y,y^k) \ , \label{eq:BPPM} 
\end{equation}
where $y^k$ is the $k$-th iterate of BPPM, $\beta_k>0$ is the step-size, and $\zeta:\bbR_{++}^m\to\bbR$ is the prox function that induces the Bregman divergence 
 \begin{equation}
 D_{\zeta}(y,y^k) := \zeta(y) - \zeta(y^k) - \lranglet{\nabla \zeta(y^k)}{y- y^k} \ .\label{eq:Bregman}
 \end{equation}
 As pointed out in~\cite{Aus_99}, one of the  standard choices of $\zeta$ is $\zeta(y)= -\sum_{i=1}^m  \ln (y_i)$,  and under this choice, if $y^0\in \bbR_{++}^m = \inter \bbR_{+}^m$, then $y^k\in \bbR_{++}^m $ for all $k\ge 1$, and so the constraint set $\bbR_+^m$ is 
automatically taken care of by the prox-function $\zeta$. 
From~\eqref{eq:BPPM} and~\eqref{eq:Bregman} we note that \eqref{eq:BPPM} is in the form of $\doi$ with $f^*(y) := \zeta(y)= -\sum_{i=1}^m \ln (y_i)$, $\rvA=-\rvI$ and $h^*(-\rvA^*y) =\beta_k \xi(y) -\lranglet{\nabla\zeta(y^{k})}{y}$.
 
Our mirror descent method for $\doi$ is shown in Algorithm~\ref{algo:DMD}, and is based on using the function $f^*$ itself as the prox function to induce the Bregman divergence:
$$D_{f^*}(y,y^k) := f^*(y) - f^*(y^k) - \lranglet{\nabla f^*(y^k)}{y- y^k} \ , $$ 
(When $f$ is $L$-smooth, similar ideas have appeared in some previous works, for example Grigas \cite{grigas2016}, Bach \cite{Bach_15}, as well as Lu and Freund~\cite{LuFreund}.) 
In step~\ref{item:subgrad}, we compute a subgradient of the dual function $d$ at $y^k$, which is denoted by $g^k$. In step~\ref{item:u_DMD} we update the primal variables $z^k$ which are used in the method to adaptively determine the step-size in the next step.  In step~\ref{item:step_size_DMD}, we compute the step-size $\gamma_k$, which we will show to be same as the step-size $\alpha_k$ in the Frank-Wolfe method (shown in Algorithm~\ref{algo:FW_SC}). Equipped with $y^k$, $g^k$ and $\gamma_k$, in step~\ref{item:Bregman} we perform a Bregman proximal minimization step to obtain  $y^{k+1}$. We emphasize that  different from the classical mirror descent method (e.g.,~\cite{Nemi_79}), in step~\ref{item:Bregman} we use $f^*$ (which is part of the objective function) as the prox function to induce the Bregman divergence $D_{f^*}(\cdot,\cdot)$.  Also notice that the domain of the sub-problem \eqref{eq:Breg_min} is $\inter\calK^\circ$ and it is perhaps not so obvious without further analysis that \eqref{eq:Breg_min} has an optimal solution.

At first glance it appears that Algorithm~\ref{algo:DMD} might not be efficient to implement, since it involves working with a system of linear equations to determine $z^0$ in the Input, and also involves solving the minimization sub-problem in step~\ref{item:Bregman}. However, as we show below, Algorithm~\ref{algo:DMD} corresponds exactly to the generalized Frank-Wolfe method in Algorithm~\ref{algo:FW_SC} for solving $\poi$, which does not involve these computationally expensive steps. This of course implies that Algorithm~\ref{algo:DMD} can be implemented via  Algorithm~\ref{algo:FW_SC} to obtain the primal iterate sequence $\{x^k\}_{k\ge 0}$, and then the dual iterate sequence $\{y^k\}_{k\ge 0}$ is determined by the simple rule $y^k = \nabla f(\rvA x^k)$ for $k \ge 0$.

\begin{algorithm}[t!]
\caption{Mirror descent method for solving $\doi$ using $f^*$ as the prox function} 
\label{algo:DMD}
\begin{algorithmic}
\State {\bf Input}: Starting points $y^0\in \inter \calK^\circ$ and $z^0\in\{z\in\calX:\rvA z = \nabla f^*(y^0)\}$ 
\State {\bf At iteration $k\in\{0,1,\ldots\}$}:
\begin{enumerate}
{\setlength\itemindent{10pt} \item \label{item:subgrad}  
Let $s^k\in\partial h^*(-\rvA^*y^k)$ 
and define 
\begin{equation}
g^k:= \nabla f^*(y^k) -  \rvA s^k \in \partial d(y^k) \ . \label{eq:g_k}
\end{equation}
}
{\setlength\itemindent{10pt} \item \label{item:u_DMD} 
If $k\ge 1$, compute 
\begin{equation}
z^k:= (1-\gamma_{k-1})z^{k-1} + \gamma_{k-1}s^{k-1} \ . \label{eq:z_k}
\end{equation}
}
{\setlength\itemindent{10pt} \item \label{item:step_size_DMD} Compute $\barG_k := \lranglet{g^k}{y^k}+ h(z^k) - h(s^k)$ and $\barD_k:=  \|g^k\|_{\nabla f^*(y^k)}$, and compute the step-size:
\begin{align}
\gamma_k := \min\left\{\frac{\barG_k}{\barD_k(\barG_k+\barD_k)} \ , 1\right\} \ . \label{eq:step_size_DMD}
\end{align}
\setlength\itemindent{10pt} \item \label{item:Bregman} Update 
\begin{equation}
y^{k+1}:= {\argmin}_{y\in\bbR^m}\, \lranglet{g^k}{y} + \gamma_k^{-1} D_{f^*}(y,y^k) \ . \label{eq:Breg_min}
\end{equation}

}
\end{enumerate}
\end{algorithmic}
\end{algorithm}

\begin{theorem}\label{thm:equiv}
Algorithms~\ref{algo:FW_SC} and~\ref{algo:DMD} are equivalent in the following sense: 
If the starting points $x^0$ in Algorithm~\ref{algo:FW_SC} and $z^0$ in Algorithm~\ref{algo:DMD} satisfy $x^0 = z^0$ and $y^0 =\nabla f(\rvA x^0)$, then an iterate sequence of either algorithm exactly corresponds to an iterate sequences of the other.\end{theorem}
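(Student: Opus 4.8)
The plan is to prove the equivalence by induction, maintaining throughout the correspondence $y^k = \nabla f(\rvA x^k)$, $z^k = x^k$, and $s^k = v^k$ between the iterates of the two algorithms; once this dictionary is in place, all the derived quantities ($G_k$ versus $\barG_k$, $D_k$ versus $\barD_k$, and $\alpha_k$ versus $\gamma_k$) will be seen to coincide, and the two update rules will collapse to one another. The workhorse is Legendre duality for self-concordant barriers: since $f^*$ is the conjugate of $f$, the gradient maps $\nabla f:\inter\calK\to\inter\calK^\circ$ and $\nabla f^*:\inter\calK^\circ\to\inter\calK$ are mutually inverse bijections, so $y^k=\nabla f(\rvA x^k)$ is equivalent to $\nabla f^*(y^k)=\rvA x^k$. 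The base case is immediate: the hypotheses $x^0=z^0$ and $y^0=\nabla f(\rvA x^0)$ give $\rvA z^0=\rvA x^0=\nabla f^*(y^0)$, which is exactly the admissibility condition imposed on the input pair $(y^0,z^0)$ of Algorithm~\ref{algo:DMD}.

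For the inductive step I would first match the subgradient/linear-minimization steps. After substituting $y^k=\nabla f(\rvA x^k)$, the Frank--Wolfe vertex solves $v^k\in\argmin_x \lranglet{\rvA^* y^k}{x}+h(x)$, and by the standard conjugate-subgradient identity this is precisely the condition $v^k\in\partial h^*(-\rvA^* y^k)$; hence $s^k:=v^k$ is a legitimate choice in Step~\ref{item:subgrad}. Using $\nabla f^*(y^k)=\rvA x^k$ and $s^k=v^k$ then gives $g^k=\nabla f^*(y^k)-\rvA s^k=\rvA(x^k-v^k)$, from which $\lranglet{g^k}{y^k}=\lranglet{\nabla f(\rvA x^k)}{\rvA(x^k-v^k)}=\tilG_k$ and, together with $z^k=x^k$, $\barG_k=G_k$. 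Because the subscript in $\barD_k=\|g^k\|_{\nabla f^*(y^k)}$ is the point $\rvA x^k\in\inter\calK$, the Hessian used is $H(\rvA x^k)$, so $\barD_k=\|\rvA(x^k-v^k)\|_{\rvA x^k}=D_k$. Identical gap and norm values force $\gamma_k=\alpha_k$ through the common step-size formula, and the update $z^{k+1}=(1-\gamma_k)z^k+\gamma_k s^k=x^k+\alpha_k(v^k-x^k)=x^{k+1}$ propagates $z^{k+1}=x^{k+1}$.

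The crux, and the step I expect to require the most care, is showing that the Bregman proximal update in Step~\ref{item:Bregman} reproduces the Frank--Wolfe convex-combination update and is well posed. Because $f^*$ itself is used as the prox function, the objective $\lranglet{g^k}{y}+\gamma_k^{-1}D_{f^*}(y,y^k)$ is strictly convex and differentiable on $\inter\calK^\circ$, and its stationarity condition collapses to $\nabla f^*(y^{k+1})=\nabla f^*(y^k)-\gamma_k g^k$. Substituting $\nabla f^*(y^k)=\rvA x^k$, $g^k=\rvA(x^k-v^k)$, and $\gamma_k=\alpha_k$ yields $\nabla f^*(y^{k+1})=\rvA(x^k+\alpha_k(v^k-x^k))=\rvA x^{k+1}$. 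Setting $y^{k+1}:=\nabla f(\rvA x^{k+1})$ (which lies in $\inter\calK^\circ$ because $x^{k+1}\in\bar{\calX}$ and by the duality bijection) both satisfies this stationarity equation and, by strict convexity, is the unique global minimizer; this simultaneously settles the existence concern flagged after~\eqref{eq:Breg_min}. Since $y^{k+1}=\nabla f(\rvA x^{k+1})$ and $z^{k+1}=x^{k+1}$, the induction hypothesis is restored and the dictionary persists for all $k$. The converse direction --- starting from a run of Algorithm~\ref{algo:DMD} and recovering the Frank--Wolfe iterates --- follows from the same identities read in reverse, using that $\nabla f$ and $\nabla f^*$ are inverse bijections so that $x^k$ is recovered from $y^k$, up to the (shared) freedom in selecting the subproblem minimizer.
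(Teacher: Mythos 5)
Your proposal is correct and follows essentially the same route as the paper's proof: induction on the identity $y^k=\nabla f(\rvA x^k)$, matching the linear-minimization step to the choice $s^k\in\partial h^*(-\rvA^*y^k)$ via the conjugate-subgradient identity, verifying $\barG_k=G_k$ and $\barD_k=D_k$, and collapsing the Bregman step's stationarity condition $\nabla f^*(y^{k+1})=\nabla f^*(y^k)-\gamma_k g^k$ to $\rvA x^{k+1}$. The only (welcome) addition is that you explicitly settle the well-posedness of the subproblem \eqref{eq:Breg_min} via strict convexity and the inverse-bijection property of $\nabla f$ and $\nabla f^*$, a point the paper flags but treats implicitly.
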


Before we prove this theorem, we first recall some properties of conjugate functions.  Let $w:\bbR^p\to\bbR\cup\{+\infty\}$ be a closed convex function and let $w^*$ denote its conjugate function, which is defined by $w^*(g) := \max_u\{\lranglet{g}{u}-w(u)\}$.   Then $w^*:\bbR^p\to\bbR\cup\{+\infty\}$ is a closed convex function, and
\begin{equation}
g \in \partial w(u)  \Longleftrightarrow \ u \in \partial w^*(g) \Longleftrightarrow \lranglet{g}{u} = w(u) + w^*(g) \ . \label{eq:Fenchel_identity}
\end{equation}

\noindent \emph{Proof of Theorem \ref{thm:equiv}}.  Let $\{y^k\}_{k\ge 0}$ be the sequence of iterates of Algorithm \ref{algo:DMD}, and let us also collect the sequences $\{z^k\}_{k\ge 0}$, $\{s^k\}_{k\ge 0}$, $\{g^k\}_{k\ge 0}$, $\{\barG_k\}_{k\ge 0}$, $\{\barD_k\}_{k\ge 0}$, and $\{\gamma^k\}_{k\ge 0}$ generated in Algorithm \ref{algo:DMD}, and use these sequences to define the following five sequences by the simple assignments $x^k := z^k$, $v^k := s^k$, $\alpha^k := \gamma^k$, $G_k := \barG_k$, $D_k := \barD_k$, for $k \ge 0$.  We now show that these five sequences correspond to an iterate sequence of Algorithm \ref{algo:FW_SC}.  Our argument will rely on the following identity:
\begin{equation}
 y^k = \nabla f(\rvA z^k) \ \ \forall k \ge 0 \ , \label{beard}
\end{equation}
which we will prove by induction.  Notice that \eqref{beard} is true for $k=0$ by supposition in the statement of the theorem.  Now let us assume that \eqref{beard} holds for a given iteration $k$, and let us examine the properties of our sequences.  We have
\begin{align*}
v^k :=s^k\in\partial h^*(-\rvA^*y^k) &\;\;\Longrightarrow \;\; -\rvA^*y^k \in \partial h(v^k)\nt\label{eq:min_subdiff}\\
 &\;\;\Longrightarrow \;\; v^k  \in {\argmin}_{x\in\bbR^n}\; \lranglet{\nabla f(\rvA x^k)}{\rvA x} + h(x) \ , \nt\label{eq:min_subdiff2}
\end{align*} 
where \eqref{eq:min_subdiff} follows from the conjugacy properties \eqref{eq:Fenchel_identity}. This shows that $v^k$ satisfies Step~\ref{item:LMO} of iteration $k$ in Algorithm \ref{algo:FW_SC}.  We also have
\begin{align*}
G_k := \barG_k :=&\lranglet{g^k}{y^k}+ h(z^k) - h(s^k) \nt\label{eq:ugh1}\\
 =& \lranglet{\nabla f^*(y^k) - \rvA s^k}{\nabla f(\rvA z^k) } + h(z^k) - h(s^k) \ , \nt\label{eq:ugh2}\\
 =& \lranglet{\rvA x^k - \rvA v^k}{\nabla f(\rvA x^k) } + h(x^k) - h(v^k) \  \nt\label{eq:ugh3}
\end{align*} 
satisfies the definition of $G_k$ in Step~\ref{item:step_size_SC} of iteration $k$ of Algorithm \ref{algo:FW_SC}.  Similarly, we have 
\begin{align*}
D_k := \barD_k := \normt{g^k}_{\nabla f^*(y^k)} = \normt{\nabla f^*(y^k) - \rvA s^k}_{\rvA z^k } = \normt{\rvA z^k - \rvA s^k}_{\rvA z^k }  = \normt{\rvA x^k - \rvA v^k}_{\rvA x^k } \nt\label{eq:ugh4}
\end{align*} 
satisfies the the definition of $D_k$ in Step~\ref{item:step_size_SC} of iteration $k$ in Algorithm \ref{algo:FW_SC}, which then implies similarly for the formula for $\alpha_k$ in \eqref{eq:step_size_SC} of Algorithm \ref{algo:FW_SC}.  Last of all, we prove the inductive step of the equality \eqref{beard}. From the optimality conditions of the optimization problem in \eqref{eq:Breg_min} we have
\begin{align*}
\nabla f^*(y^{k+1}) = \nabla f^*(y^k) - \gamma_k g_k  \;\;\Longrightarrow \;\; \nabla f^*(y^{k+1}) = \nabla f^*(y^{k}) - \gamma_k g_k = (1-\gamma_k) \rvA z^k + \gamma_k \rvA s^k,
\end{align*}
where in the last step we use $\nabla f^*(y^{k}) = \rvA z^k$ from \eqref{beard} and~\eqref{eq:g_k}. Since $z^{k+1}:= (1-\gamma_{k})z^{k} + \gamma_{k}s^{k}$, we have $\nabla f^*(y^{k+1}) = \rvA z^{k+1}$ implies $y^{k+1} = \nabla f(\rvA z^{k+1})$ by conjugacy and completes the proof of \eqref{beard}. This then shows that an iterate sequence of Algorithm \ref{algo:DMD} corresponds to an iterate sequence of Algorithm \ref{algo:FW_SC}.  The reverse implication can also be proved using identical logic as above. 
\qed

We now leverage the equivalence of Algorithms~\ref{algo:FW_SC} and~\ref{algo:DMD} to analyze the iteration complexity of Algorithm \ref{algo:DMD}.  The following proposition relating the duality gap to the Frank-Wolfe gap will be useful.

\begin{prop}\label{bbike} $G_k = d(y^k) + F(x^k)$ for all $k \ge 0$. 
\end{prop}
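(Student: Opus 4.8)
The plan is to recognize the identity $G_k = d(y^k) + F(x^k)$ as nothing more than two applications of the Fenchel--Young equality \eqref{eq:Fenchel_identity}, exploiting the correspondence $y^k = \nabla f(\rvA x^k)$ (established in \eqref{beard}) together with the first-order optimality condition that defines $v^k$ in Step~\ref{item:LMO} of Algorithm~\ref{algo:FW_SC}. First I would expand the right-hand side using the definitions $d(y^k) = f^*(y^k) + h^*(-\rvA^*y^k)$ and $F(x^k) = f(\rvA x^k) + h(x^k)$, and regroup the four terms into an ``$f$-part'' $f^*(y^k) + f(\rvA x^k)$ and an ``$h$-part'' $h^*(-\rvA^*y^k) + h(x^k)$.

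For the $f$-part, since $y^k = \nabla f(\rvA x^k)$ we have $y^k \in \partial f(\rvA x^k)$, so \eqref{eq:Fenchel_identity} applied with $w = f$ gives $f^*(y^k) + f(\rvA x^k) = \lranglet{y^k}{\rvA x^k} = \lranglet{\nabla f(\rvA x^k)}{\rvA x^k}$. For the $h$-part, recall that $v^k$ minimizes $\lranglet{\nabla f(\rvA x^k)}{\rvA x} + h(x) = \lranglet{\rvA^*y^k}{x} + h(x)$ over $x$, whose optimality condition is $-\rvA^*y^k \in \partial h(v^k)$ (this is exactly \eqref{eq:min_subdiff}). Applying \eqref{eq:Fenchel_identity} with $w = h$ then yields $h^*(-\rvA^*y^k) + h(v^k) = \lranglet{-\rvA^*y^k}{v^k} = -\lranglet{\nabla f(\rvA x^k)}{\rvA v^k}$, so that $h^*(-\rvA^*y^k) = -\lranglet{\nabla f(\rvA x^k)}{\rvA v^k} - h(v^k)$.

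Substituting these two identities into $d(y^k) + F(x^k)$, the $\pm f(\rvA x^k)$ terms cancel and I am left with $\lranglet{\nabla f(\rvA x^k)}{\rvA x^k} - \lranglet{\nabla f(\rvA x^k)}{\rvA v^k} + h(x^k) - h(v^k) = \lranglet{\nabla f(\rvA x^k)}{\rvA(x^k - v^k)} + h(x^k) - h(v^k)$, which is precisely $G_k$ by its definition in \eqref{eq:FW_gap}. There is no substantive obstacle here; the computation is a short chain of cancellations. The only point requiring care is to invoke the correct subgradient relation on each side --- the $f$-part uses $y^k = \nabla f(\rvA x^k)$, while the $h$-part uses the subproblem optimality $-\rvA^*y^k \in \partial h(v^k)$ --- both of which are already in hand from the equivalence proof of Theorem~\ref{thm:equiv} immediately preceding the proposition.
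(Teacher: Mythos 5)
Your proof is correct and follows essentially the same route as the paper's: both arguments consist of the same two applications of the Fenchel--Young equality \eqref{eq:Fenchel_identity}, one using $y^k = \nabla f(\rvA x^k)$ and one using $-\rvA^* y^k \in \partial h(v^k)$ from \eqref{eq:min_subdiff}. The only difference is cosmetic --- you expand $d(y^k)+F(x^k)$ and simplify down to $G_k$, whereas the paper starts from $G_k$ and builds up to $d(y^k)+F(x^k)$.
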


\begin{proof} We have for all $k \ge 0$: 
\begin{align}
G_k &= \lranglet{\nabla f(\rvA x^k)}{\rvA x^k} - \lranglet{\nabla f(\rvA x^k)}{\rvA v^k} + h(x^k)  - h(v^k)\\
&= f(\rvA x^k)+ h(x^k) + f^*(y^k)+ \lranglet{-\rvA^*y^k}{ v^k}  - h(v^k)\label{eq:dualgap_1}\\
&= f(\rvA x^k)+ h(x^k) + f^*(y^k)+h^*(-\rvA^*y^k) = d(y^k) + F(x^k) \ . \label{eq:dualgap_2}
\end{align} where in~\eqref{eq:dualgap_1} we use  the conjugacy property in~\eqref{eq:Fenchel_identity} and $y^k =\nabla f(\rvA x^k)$, and in~\eqref{eq:dualgap_2} we use $-\rvA^*y^k\in\partial h(v^k)$ (by~\eqref{eq:min_subdiff} and $s^k = v^k$) and~\eqref{eq:Fenchel_identity}.\end{proof}

Since $G_k$ upper bounds the dual optimality gap $d_k:= d(y^k) - d(y^{*})$, from part~\ref{item:FWGAP} of Theorem~\ref{thm:SC} we immediately have the following corollary. 

\begin{corollary}
Let $\mathrm{DGAP}_\varepsilon$ denote the number of iterations required by Algorithm \ref{algo:DMD} to obtain $d_k \le \varepsilon$.  Then:
\begin{equation}
\mathrm{DGAP}_\varepsilon   \le  \lceil 5.3(\delta_0 + \theta+\rh)\ln(10.6\delta_0) \rceil + \left\lceil \frac{24(\theta+\rh)^2}{\varepsilon} \right\rceil \ . 
\end{equation}\qed
\end{corollary}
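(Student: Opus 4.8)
The plan is to obtain this bound as an immediate corollary of the algorithmic equivalence established in Theorem~\ref{thm:equiv} together with the gap identity of Proposition~\ref{bbike}. The only substantive content is a single inequality, namely that the Frank-Wolfe gap $G_k$ dominates the dual optimality gap $d_k := d(y^k) - d(y^*)$ at every iteration. Once $d_k \le G_k$ is in hand, the number of iterations needed to drive $d_k$ below $\varepsilon$ can be no larger than the number needed to drive $G_k$ below $\varepsilon$, and the latter is exactly the quantity $\mathrm{FWGAP}_\varepsilon$ already bounded in part~\ref{item:FWGAP} of Theorem~\ref{thm:SC}.

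First I would invoke Theorem~\ref{thm:equiv} with matched starting points (so that $x^0 = z^0$ and $y^0 = \nabla f(\rvA x^0)$), which guarantees that running Algorithm~\ref{algo:DMD} is identical to running Algorithm~\ref{algo:FW_SC}; in particular the dual iterates satisfy $y^k = \nabla f(\rvA x^k)$ and the quantity $\barG_k$ computed in Step~\ref{item:step_size_DMD} of Algorithm~\ref{algo:DMD} coincides with the Frank-Wolfe gap $G_k$ of Algorithm~\ref{algo:FW_SC}. This lets me pass freely between the dual sequence $\{y^k\}$ and the primal sequence $\{x^k\}$. Next I would apply Proposition~\ref{bbike}, which gives $G_k = d(y^k) + F(x^k)$, and combine it with the trivial bound $F(x^k) \ge F^*$ and the strong-duality relation $F^* = -d^* = -d(y^*)$ (property~3 of $\doi$, established via \cite[Theorem~3.51]{Peyp_15}). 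This produces the chain
\begin{equation*}
G_k = d(y^k) + F(x^k) \ge d(y^k) + F^* = d(y^k) - d(y^*) = d_k \ ,
\end{equation*}
which is precisely the claimed domination $d_k \le G_k$.

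Finally, since $d_k \le G_k$ holds for all $k$, any iteration at which $G_k \le \varepsilon$ is also an iteration at which $d_k \le \varepsilon$, so the first index satisfying $d_k \le \varepsilon$ is no later than the first index satisfying $G_k \le \varepsilon$; hence $\mathrm{DGAP}_\varepsilon \le \mathrm{FWGAP}_\varepsilon$, and substituting the bound from part~\ref{item:FWGAP} of Theorem~\ref{thm:SC} yields the stated estimate. I do not expect any genuine obstacle: attainment of a dual optimizer $y^*$ (so that $d(y^*)$ is well-defined) is already supplied by the strong-duality statement, and every remaining step is an equality or a one-line inequality. The one point requiring care is the sign bookkeeping in $F^* = -d^* = -d(y^*)$, which must be tracked correctly so that the chain lands on $d_k$ and not on its negation.
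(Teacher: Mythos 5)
Your proposal is correct and follows exactly the paper's route: the paper derives the corollary in one line from Proposition~\ref{bbike} (which gives $G_k = d(y^k)+F(x^k) \ge d(y^k)+F^* = d_k$ by strong duality) combined with part~\ref{item:FWGAP} of Theorem~\ref{thm:SC}. Your write-up simply makes explicit the sign bookkeeping and the role of Theorem~\ref{thm:equiv} that the paper leaves implicit.
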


We end this section with some remarks. First, if one considers $\doi$ directly then its ``primitive'' objects are $f^*$ and $h^*$, and  implementing Algorithm~\ref{algo:DMD} via Algorithm~\ref{algo:FW_SC} requires knowing $h = (h^*)^*$ and also the Hessian of $f = (f^*)^*$ (used to compute the step-size $\gamma_k$).  {While these objects are not part of the primitive description of $\doi$, it follows from conjugacy of self-concordant barriers that $\nabla f = (\nabla f^*)^{-1}$ and $H(\cdot) = H^*(\nabla f(\cdot))^{-1}$ where $H^*$ is the Hessian of $f^*$ (see~\cite[Theorem~3.3.4]{Renegar_01}), and therefore one can work directly with $\nabla f$ and $H$ through the primitive objects $\nabla f^*$ and  $H^*$.} Of course, for standard logarithmically-homogeneous barriers $f^*$ such as $f^*(y)= -\sum_{i=1}^m \ln y_i$, where $y\in\bbR_{++}^m$ or $f^*(Y) = -\ln\det Y$, where $Y\in\bbS_{++}^p$ (and $m = p(p+1)/2$), its Fenchel conjugate of $f$ and the Hessian of $f$ are well-known. In addition, for many simple non-smooth functions $h^*$, e.g., $h^*(w) = \normt{w}_p$ ($p\in[1,+\infty]$), 
its Fenchel conjugate either is well-known or can be easily computed. Therefore, for many problems of interest implementing Algorithm~\ref{algo:DMD} via Algorithm~\ref{algo:FW_SC} is likely to be quite viable. 
 Second, note that the step-size $\gamma_k$ used in Algorithm~\ref{algo:DMD} is adaptive, and is different from a standard step-size that is monotone decreasing in $k$, e.g., $\gamma_k = O(1/\sqrt{k})$ or $\gamma_k = O(1/{k})$ (cf.~\cite{Nemi_79}). This poses difficulties in attempting to directly analyze Algorithm~\ref{algo:DMD} via standard approaches which involves using $D_{f^*}(y^k,y^*)$ as the potential function (see e.g.,~\cite{Bach_15}). Nevertheless, the convergence guarantee for 
$G_k$ derived from Algorithm~\ref{algo:FW_SC} enables us to analyze the converge of the dual optimality gap $d_k$ in Algorithm \ref{algo:DMD}. 


\section{Computational Experiments}\label{experiments}

In this section we present the results of some basic numerical experiments where we evaluate the performance of our generalized Frank-Wolfe method in Algorithm \ref{algo:FW_SC} on the Poisson image de-blurring problem with TV regularization (Application 1 in Section \ref{sec:applications}), and also on the PET problem (Application 2 of Section \ref{sec:applications}).

\subsection{First numerical experiment: Poisson image de-blurring with TV regularization}\label{sec:delurring}

We consider the Poisson image de-blurring problem with TV regularization as described in Application 1 of Section~\ref{sec:applications}, where the formulation was presented in equation~\eqref{eq:deblurring_TV}.  Observe that $f:u \mapsto -\textstyle\sum_{l=1}^N y_l\ln \big(u_l)$ is {neither Lipschitz nor $L$-smooth on the set $\{u\in\bbR^N: u = \rvA x, \ 0 \le x \le Me\}$}, and ${\rm TV}(\cdot)$ does not have an efficiently computable proximal operator, which prevents most standard first-order methods from being applicable to tackle~\eqref{eq:deblurring_TV}. As a result, very few methods have been proposed to solve~\eqref{eq:deblurring_TV} in the literature. In~\cite{Dey_06} an ad-hoc expectation-maximization (EM) method was proposed to solve~\eqref{eq:deblurring_TV}, however the method is not guaranteed to converge due to the non-smoothness of the function ${\rm TV}(\cdot)$ (see e.g.,~\cite{Pierro_95}). Both~\cite{Harmany_12} and~\cite{Chambolle_18} proposed methods to solve a ``perturbed'' version of~\eqref{eq:deblurring_TV} by adding a small offset to each $\ln(\cdot)$ term, which of course, compromises the original objective function $\barF(x)$ near $x=0$. (Such a perturbed version is not needed for the theoretical analysis in \cite{Chambolle_18}, but seems to be used to improve practical performance.)  Using the generalized Frank-Wolfe method of Algorithm \ref{algo:FW_SC}, we are able to directly solve formulation \eqref{eq:deblurring_TV}, the details of which we now describe.

\subsubsection{Implementation of generalized Frank-Wolfe method for solving \eqref{eq:deblurring_TV}}

We first re-describe the total variation function ${\rm TV}(x)$ by introducing some network definitions and terminology.  The TV function penalizes potential differences on the horizontal and vertical grid arcs of the standard $m \times n$ pixel grid.  Considering each pixel location as a node, let $\calV$ denote these nodes enumerated as $\calV = [N] =\{1, \ldots ,  N = m \times n\}$, and consider the undirected graph $\calG = (\calV,\calE)$ where $\calE$ is the set of horizontal and vertical edges of the grid.  These horizontal and vertical edges can be described as $\calE_\rmh$ and $\calE_\rmv$, respectively, where 
\begin{align*}
\calE_\rmh :=  \{\{l,l+1\}: l \in[N],\;\; l \!\!\! \mod n \ne 0\}\quad \mbox{and}\quad \calE_\rmv :=  \{\{l,l+n\}: l \in[N-m] \} \ .
\end{align*}
With this notation we have
\begin{equation}
{\rm TV}(x) = \textstyle{\sum}_{\{i,j\}\in \calE} \;\abst{x_i-x_j} \ . \label{eq:TV_abs}
\end{equation}
Based on $\calG$, we define the directed graph $\tilcalG = (\calV,\tilcalE)$ where $\tilcalE$ is obtained by replacing each (undirected) edge in  $\calE$ with two directed edges of opposite directions. Then from~\eqref{eq:TV_abs} we have 
\begin{align}
{\rm TV}(x) = \textstyle\sum_{(i,j)\in \tilcalE}\;\; \max\{x_i-x_j,0\} = \min&\;\; e^\top r\nn\\
\st& \;\;r_{ij}\ge x_i-x_j \ ,\;\; \forall\;(i,j) \in \tilcalE  \\
= \min & \;\; e^\top r\;\;\nn\\
\st& \;\;r\ge B^\top x \ , \label{eq:TV_LP} 
\end{align}
where $B$ is the node-arc incidence matrix of the network $\tilcalE$. 

The Frank-Wolfe subproblem~\eqref{subcbday} associated with~\eqref{eq:deblurring_TV} is:
\begin{align}
{\min}_{x\in\bbR^N}&\;\; \lranglet{\nabla \bar{f}(x^k)}{x} + \textstyle(\sum_{l=1}^{N} a_l)^\top x + \lambda {\rm TV}(x)\qquad \st\;\; 0\le  x\le M e \ , \label{eq:box_TV_LO}
\end{align}
where 
\begin{equation}
\barf(x): = -\textstyle\sum_{l=1}^{N} y_l\ln(a_l^\top x)  \ .\label{eq:barf_TV}
\end{equation} Based on~\eqref{eq:TV_LP}, we can rewrite~\eqref{eq:box_TV_LO} as the following linear optimization problem: 
\begin{align}
\min_{(x,r)\in\bbR^N\times \bbR^{2\abst{\calE}}}&\;\; \textstyle\lranglet{\nabla \bar{f}(x^k)}{x} + (\sum_{l=1}^{N} a_l)^\top x + \lambda e^\top r \nn\\
\st&\;\; 0\le  x\le M e, \;\;r\ge B^\top x \ . \label{eq:box_TV_LO2}
\end{align} This linear problem can be solved as a linear program (LP) using available software, or as a constrained dual network flow problem using available network flow software.  In our implementation we solved \eqref{eq:box_TV_LO2} using the primal simplex method in Gurobi \cite{gurobi}. Note that in~\eqref{eq:box_TV_LO2}, the variable $r$ in the constraint set appears to be unbounded. However, from the form of the objective function and the definition of $B$, it is easy to see that any optimal solution $(x^*,r^*)$ must satisfy $0\le r^*\le Me$, and hence~\eqref{eq:box_TV_LO2} always admits an optimal solution.  


Given the representation of $\TV(\cdot)$ in~\eqref{eq:TV_LP}, 
we can also rewrite~\eqref{eq:deblurring_TV} as 
\begin{align}
\min_{x\in\bbR^N, \;\;r\in\bbR^{2\abst{\calE}}}&\;\; \barf(x) + \textstyle(\sum_{l=1}^{N} a_l)^\top x + \lambda e^\top r \nn\\
\st&\;\; 0\le  x\le M e, \;\;r\ge B^\top x. \label{eq:box_TV2}
\end{align}
(Note that the Frank-Wolfe sub-problem~\eqref{subcbday} associated with~\eqref{eq:box_TV2} is 
the same as that associated with~\eqref{eq:deblurring_TV}, which is shown in~\eqref{eq:box_TV_LO2}.) 
In the following, we will apply our Frank-Wolfe method to solve the reformulated problem~\eqref{eq:box_TV2}. The advantage of~\eqref{eq:box_TV2} lies in that its structure yields an efficient procedure for an exact line-search to compute the step-size $\alpha_k$. Specifically, given $(x^k,r^k)$, let $(v^k,w^k)$ be an optimal solution of~\eqref{eq:box_TV_LO2}; then the exact line-search problem using \eqref{eq:box_TV2} is:
\begin{align}
\alpha_k = {\argmin}_{\alpha\in [0,1]} \; \barf(x^k + \alpha(v^k - w^k)) + \alpha \big(\textstyle(\sum_{l=1}^{N} a_l)^\top (v^k - x^k) + \lambda e^\top (w^k - r^k)\big) \ . \label{lulup}
\end{align}
The detailed description of the exact line-search procedure for problems of the format \eqref{lulup} are presented in Appendix~\ref{app:line_search}.  Henceforth, we denote our generalized Frank-Wolfe method for Poisson de-blurring with the adaptive step-size in~\eqref{eq:step_size_SC} as {\tt FW-Adapt}, and we denote our Frank-Wolfe method with exact line-search of \eqref{lulup} as {\tt FW-Exact}. Note that since in each iteration, {\tt FW-Exact} makes no less progress than {\tt FW-Adapt} in terms of the objective value, the computational guarantees in Theorem~\ref{thm:SC} (which are proved for {\tt FW-Adapt}) also apply to {\tt FW-Exact}. 

\subsubsection{Results}\label{sec:res_deblurring}

We tested {\tt FW-Adapt} and {\tt FW-Exact} on the Shepp-Logan phantom image~\cite{Shepp_74} of size $100\times 100$ (hence $N = 10,000$).  The true image $X$ is shown in Figure~\ref{fig:clean_blur_img}(a); this image acts as a 2D slice of a representative 3D human brain image, and is a standard test image in image reconstruction algorithms. We generated the blurred noisy image $Y$ shown in Figure~\ref{fig:clean_blur_img}(b) using the methodology exactly as described in Section~\ref{sec:applications}. For both {\tt FW-Adapt} and {\tt FW-Exact}, we chose the starting point $x^0 = {\sf vec}(Y)$, and we set $\lambda = 0.01$. In order to have a accurate computation of optimality gaps, we used CVXPY~\cite{Diamond_16} to (approximately) find the optimal objective value $\barF^*$ of~\eqref{eq:deblurring_TV}. All computations were performed in Python 3.8 on a laptop computer. 

\begin{figure}[t!]\centering
\subfloat[True image $X$]{\includegraphics[width=.3\linewidth]{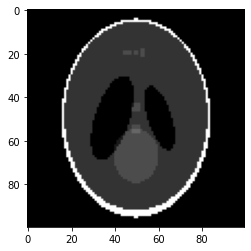}}
\subfloat[Noisy image $Y$]{\includegraphics[width=.3\linewidth]{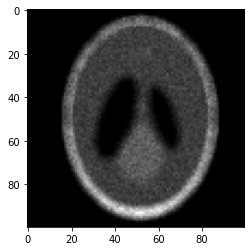}} 
\caption{True and noisy $100\times 100$ versions of the Shepp-Logan phantom image.}\label{fig:clean_blur_img} 
\end{figure}

Figures \ref{fig:deblur_gap}(a) and \ref{fig:deblur_gap}(b) show (in log-log scale) the empirical optimality gaps $\barF(x^k) - \barF^*$ obtained by {\tt FW-Adapt} and {\tt FW-Exact} both as a function of the wall-clock time (in seconds) and as a function of the iteration counter $k$, respectively. From the figure we observe that {\tt FW-Exact} converges faster than {\tt FW-Adapt}, although the difference between the empirical optimality gaps of these two methods gradually lessens. This is expected since with exact line-search, {\tt FW-Exact} can take a potentially larger step at each iteration than {\tt FW-Adapt}, and likely therefore has faster convergence. The recovered images computed using {\tt FW-Adapt} and {\tt FW-Exact} are shown in Figure~\ref{fig:Box_TV_res_img}(a) and Figure~\ref{fig:Box_TV_res_img}(b), respectively. We observe that the image recovered by {\tt FW-Adapt} has similar but slightly inferior quality compared to that recovered by {\tt FW-Exact}. This is consistent with the algorithms' performance shown Figure~\ref{fig:deblur_gap}, as {\tt FW-Adapt} has a larger empirical optimality gap at termination compared to that of {\tt FW-Exact}.  

\begin{figure}[t]
\subfloat[Optimality gap versus time (in seconds)]{\includegraphics[width=.48\linewidth]{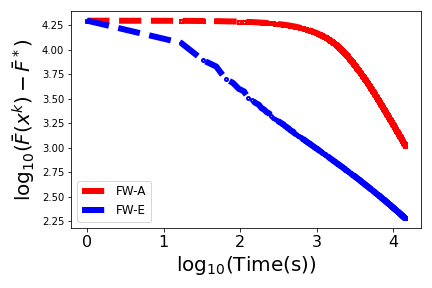}} \hfill 
\subfloat[Optimality gap versus iterations]{\includegraphics[width=.48\linewidth]{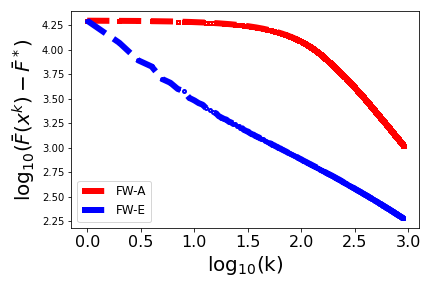}}
\caption{Comparison of empirical optimality gaps of {\tt FW-Adapt} ({\tt FW-A}) and {\tt FW-Exact} ({\tt FW-E}) for image recovery of the Shepp-Logan phantom image~\cite{Shepp_74} of size $100\times 100$.}\label{fig:deblur_gap}
\end{figure}

\begin{figure}[t!]\centering
\subfloat[Recovered image: {\tt FW-Adapt}]{\includegraphics[width=.3\linewidth]{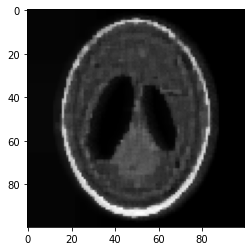}}
\subfloat[Recovered image: {\tt FW-Exact}]{\includegraphics[width=.3\linewidth]{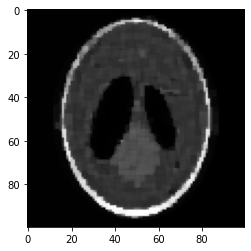}}
\caption{Recovered images computed using {\tt FW-Adapt} and {\tt FW-Exact}.}\label{fig:Box_TV_res_img} 
\end{figure}

\subsection{Second numerical experiment: positron emission tomography}\label{sec:PET}

We consider the positron emission tomography (PET) problem as described in Application 2 of Section~\ref{sec:applications}, where the formulation was presented in equation \eqref{eq:PET_final}.  We generated artificial data instances of this problem according to the following data generation process. Because the events emitted from each voxel $i$  can only be detected by a small proportion of bins, it follows that the probability matrix $P$ should be highly sparse.  We chose a sparsity value of $5\%$. Given the number of voxels $n$ and the number of bins $m$, for each $i \in [n]$ we randomly chose (without replacement) a subset of $[m]$ denoted by $\calJ_i$, such that $\abst{\calJ_i}=\floor{m/20}$.  Next, for all $j\in\calJ_i$ we then generated \iid entries $\bar p_{ij} \sim U[0,1]$ and normalized the values to obtain $p_{ij} := \barp_{ij}/\sum_{j'\in\calJ_i} \bar p_{ij'}$. For all $j\not\in \calJ_i$ we set $p_{ij}=0$. We generated the mean values $x_i$ for $i \in [n]$ by first generating \iid values $\bar x_i \sim N(100,9)$ 
and then set $x_i = \abst{\barx_i}$ for $i\in[n]$.  We then simulated the event counts $X_i$ at each voxel $i$ by generating $X_i \sim {\sf Poisson}(x_i)$ for $i\in[n]$.  Finally, using $P$ and $\{X_i\}_{i\in[n]}$, we generated the number of observed events $Y_j$ detected at bin $j$ by independently generating  values $\tilde Y_j \sim {\sf Poisson}(y_j)$ with $y_j := \textstyle\sum_{i=1}^n p_{ij} X_i$ for $j\in[m]$. Since $Y_j\in\{0,1,2,\ldots\}$, by omitting bins for which $Y_j = 0$ we ensure that $Y_j\ge 1$ for all $j\in[m]$ in the PET problem~\eqref{eq:PET_final}. 

\subsubsection{Comparison of Algorithms}\label{sec:benchmark}
We solved instances of the PET problem \eqref{eq:PET_final} using the following five algorithms/variants:
\begin{itemize}
	\item {\tt FW-Adapt} -- our generalized Frank-Wolfe method in Algorithm~\ref{algo:FW_SC} with the adaptive step-size as stated in~\eqref{eq:step_size_SC},
	\item {\tt FW-Exact} -- our generalized Frank-Wolfe method in Algorithm~\ref{algo:FW_SC} with the adaptive step-size \eqref{eq:step_size_SC} replaced by an exact line-search as described in detail in Appendix~\ref{app:line_search},
	\item {\tt RSGM-Fixed} -- relatively smooth gradient method with fixed step-size~\cite{Bauschke_17,Lu_18}, 
	\item {\tt RSGM-LSBack} -- relatively smooth gradient method with backtracking line-search~\cite{Ston_20},
	\item {\tt EM} -- a simple algorithm developed by Cover in 1984 specifically for a problem that is equivalent to the normalized PET problem ~\cite{Cover_84}.  
\end{itemize}We excluded mirror descent from our computational comparisons because the sparsity of $P$ violates the basic assumption needed to apply the mirror descent (MD) method to \eqref{eq:PET_final} (see e.g.,~\cite{BenTal_01}).  We now review the relevant details of the three algorithms {\tt RSGM-F}, {\tt RSGM-LS} and {\tt EM}. 

\vspace{1ex}
\noindent 1.\ {\tt RSGM-Fixed}~\cite{Bauschke_17,Lu_18}. Although the objective function $L$ of~\eqref{eq:PET_final} is differentiable on its domain, its gradient $\nabla L$ is not Lipschitz on the constraint set $\Delta_n$.  
Therefore standard gradient methods (or accelerated versions)~\cite{Nest_13} are not applicable.  As a remedy for this situation, we can use the relatively-smooth gradient method \cite{Bauschke_17,Lu_18} to solve~\eqref{eq:PET_final}, which is designed in particular for problems whose objective functions have certain types of structured non-smooth gradients.  Indeed, as shown in Bauschke et al.~\cite{Bauschke_17}, $L$ is $\bar Y$-smooth relative to the reference function 
\begin{equation}
r(z) := -\textstyle\sum_{i=1}^n \ln (z_i) \ \mbox{for} \  z\in\bbR^n_{++}:=\{x\in\bbR^n:x>0\} \ , 
\end{equation}
where $\bar Y := \sum_{j=1}^m Y_j$.  Specifically, this means that
\begin{equation}
\nabla^2 L(z) \preceq \bar Y \nabla^2 r(z) \quad \forall\,z\in\bbR^n_{++}\;  \ . 
\end{equation}
Algorithm \ref{algo:RSGM} describes RSGM specified to solve the PET problem \eqref{eq:PET_final} using the reference function $r$ with relative-smoothness parameter $\bar Y$, where  $\relint\Delta_n$ denotes the relative interior of $\Delta_n$ in the Input, and $D_r(\cdot,\cdot)$ denotes the Bregman divergence in~\eqref{eq:equiv_BP} which is defined similarly as in~\eqref{eq:Bregman}. 
We set the step-size $\alpha_k =1/\barY$ for all $k\ge 0$ in the fixed-step-size version of the method. 
 Regarding the sub-problem \eqref{eq:equiv_BP} that needs to be solved at each iteration, its optimal solution is unique and lies in $\relint\Delta_n$ since the reference function $r$ is Legendre with domain $\bbR_{++}^n$ (see~\cite[Section~2.1]{Bauschke_17} for details). Therefore we have $z^k\in\ri\Delta_n$ for all $k\ge 0$. 
To efficiently solve \eqref{eq:equiv_BP}, we used the approach in~\cite[pp.\ 341-342]{Lu_18}, which reduces \eqref{eq:equiv_BP} to finding the unique positive root of a strictly decreasing univariate function on $(0,+\infty)$. 

\begin{algorithm}[t!]
\caption{RSGM for solving the PET problem~\eqref{eq:PET_final}} \label{algo:RSGM}
\begin{algorithmic}
\State {\bf Input}: Starting point $z^0\in\relint\Delta_n := \{z>0:\sum_{i=1}^n z_i = 1\}$. 
\State {\bf At iteration $k$:} 
\begin{enumerate}
{\setlength\itemindent{10pt} \item {Compute  $\nabla L(z^k) = \sum_{j=1}^m (Y_j/\lranglet{p_j}{z^k}) p_j$, where $p_j$ 
is the vector corresponding to the $j$-th column of $P$ for $j\in[m]$\ .}} 
{\setlength\itemindent{10pt} \item {Choose step-size $\alpha_k>0$ and compute} 
\begin{align}
z^{k+1} &=  {\argmax}_{z\in\Delta_n}\; \lranglet{\nabla L(z^k)}{z} - \alpha_k^{-1} D_r(z,z^k) \ .
\label{eq:equiv_BP}
\end{align}
}
\end{enumerate}
\end{algorithmic}
\end{algorithm}

\vspace{1ex}
\noindent  2.\ {\tt RSGM-LSBack}~\cite{Ston_20}.
This method is a version of the relatively smooth gradient method shown in Algorithm~\ref{algo:RSGM}, with the extension that a backtracking line-search procedure is employed to compute the local relative-smoothness parameter $\bar Y_k$ at $z^k$ and then the step-size is chosen as $\alpha_k = 1/\bar Y_k$ at each iteration. (The details of this procedure can be found in~\cite[Algorithm~1]{Ston_20}.) Note that depending on the location of $z^k$, $\bar Y_k$ may be (significantly) smaller than the (global) relative-smoothness parameter $\barY$. 

\vspace{1ex}
\noindent 3.\ {\tt EM}~\cite{Cover_84}. 
This surprisingly simple algorithm was developed by Cover in 1984 specifically for a problem that is equivalent to the following normalized PET problem (see \cite{Cover_84}):
\begin{equation}
\max \; \barL(z) := \textstyle\sum_{j=1}^m \barY_j\ln \big(\sum_{i=1}^n p_{ij} z_i\big)\quad \st\;\; {z\in\Delta_n} \ ,
\end{equation}
where $\barY_j:= Y_j/\sum_{j'=1}^m Y_{j'}>0$ for all $j\in[m]$ (recall that we have assumed without loss of generality that $Y_j\ge 1$ for all $j\in[m]$). 
The method starts with $z^0\in \ri\Delta_n$ and at each iteration $k$ updates $z^k$ as follows:
\begin{equation}
z^{k+1}_i := z^k_i \nabla_i \barL(z^k) =  \sum_{j=1}^m \barY_j \frac{p_{ij} z^k_i }{\sum_{l=1}^n p_{lj} z_l^k} \ , \quad \forall\,i\in[n] \ . \label{eq:EM}
\end{equation} 
Note that since $\barY_j>0$ for all $j\in[m]$, we easily see that $z^k\in\ri\Delta_n$ for all $k\ge 0$.


\subsubsection{Results}

We report results for problems of dimensions $n=m=1,000$, as we observed that the algorithms' relative performance were not particularly sensitive to the dimensions $m$ and $n$.  We ran the algorithms using two different choices of starting points: (i) $z^0 = z^{\mathrm{bd}}\in \relint\Delta_n$ chosen very close to $\bdry\bbR^n_+$ (the boundary of $\bbR^n_+$), and (ii) $z^0 = z^{\mathrm{ct}}:=\tfrac{1}{n}e$ which is the barycenter of $\Delta_n$.  To determine $z^{\mathrm{bd}}$ we first used a greedy heuristic to determine a low- (or lowest-)cardinality index set $\calI\subseteq [n]$ 
 for which $\sum_{i\in\calI} \, p_j>0$, where $p_j$ is the $j$-th column of $P$. Define $\bar\delta := 10^{-6}/n$, and we then set 
$$z^{\mathrm{bd}}_i := \left\{ \begin{array}{ll} \bar\delta & \mbox{for} \ \ i\not\in \calI \ ,  \\   (1 - (n-\abst{\calI})\bar\delta)/\abst{\calI} \ \ & \mbox{for} \ \ i\in \calI \ .  \end{array} \right. $$ Note that this ensures $z^{\mathrm{bd}}\in\ri\Delta_n$. Similar to Section~\eqref{sec:delurring}, we used CVXPY~\cite{Diamond_16} to (approximately) compute the optimal objective function value $L^*$ of~\eqref{eq:PET_final} in order to accurately report optimality gaps.  Again, all computations were performed in Python 3.8 on a laptop computer.

Figures~\ref{fig:exp_res1} and \ref{fig:exp_res2} show plots of the empirical optimality gaps $L(z^k) -L^*$ of all five methods with the starting points $z^{\mathrm{bd}}$ and $z^{\mathrm{ct}}$, respectively. From Figure~\ref{fig:exp_res1} we observe the following: 
\begin{enumerate}[label = (\roman*)]
\item The relatively smooth gradient methods, namely {\tt RSGM-Fixed} and {\tt RSGM-LSBack}, make very little progress in reducing the empirical optimality gap. For {\tt RSGM-Fixed}, this is because the relative smoothness parameter $\barY = \sum_{j=1}^m Y_j$ is typically very large, implying that the step-size $1/\barY$  is very small. Since the starting point $z^{\mathrm{bd}}$ is very close to $\bdry\bbR^n_+$, where the local relative smoothness parameter of the objective function $L$ is close to $\barY$, {\tt RSGM-LSBack} exhibits similar behavior to {\tt RSGM-Fixed}.  (In other words, the backtracking line-search does not help much in this case.)

\item The two versions of our generalized Frank-Wolfe methods, namely {\tt FW-Adapt} and {\tt FW-Exact}, outperform the relatively smooth gradient methods. In addition, {\tt FW-Exact} converges faster than {\tt FW-Adapt} initially, but when close to the optimum (for example, when the empirical optimality gap falls below 10), these two methods have the same convergence behavior. Indeed, this observation also appears on the Poisson image de-blurring problem with TV-regularization (see Section~\ref{sec:res_deblurring}).

\item The {\tt EM} algorithm outperforms all the other methods which is rather surprising at first glance. (And in fact it is unknown from the literature whether the method has any type of non-asymptotic convergence guarantee.) However, we note that this method, which uses a multiplicative form of update (see equation~\eqref{eq:EM}), is specifically designed for problems with the PET problem structure in~\eqref{eq:PET_final}, and it is not clear how to suitably generalize this method to more general problems of the form $\poi$ such as the Poisson image de-blurring problem with TV-regularization in~\eqref{eq:deblurring_TV}. 
\end{enumerate}

Figure~\ref{fig:exp_res2} shows the performance of the five algorithms when $z^0 = z^{\mathrm{ct}}$ is the barycenter.  We observe that the performance of the methods is mostly similar to when started at $z^{\mathrm{bd}}$ except for one significant difference, namely that {\tt RSGM-LSBack} exhibits significantly faster convergence compared to when started at $z^{\mathrm{ct}}$. Indeed, {\tt RSGM-LSBack} outperforms all the other general purpose methods, namely {\tt RSGM-Fixed}, {\tt FW-Adapt} and {\tt FW-Exact}. This is likely because on the ``central region'' of $\Delta_n$ the local relative smoothness parameter of $L$ is probably much smaller than the global bound $\barY$, and therefore {\tt RSGM-LSBack} is able to take a much larger steps.  This also indicates that the convergence behavior of {\tt RSGM-LSBack} is likely to be sensitive to the choice of starting point, which is not the case for the other methods (including our generalized Frank-Wolfe methods).  



\begin{figure}[t!]\centering
\subfloat[Time plot: $n=1000$, $m = 1000$]{\includegraphics[width=.48\linewidth]{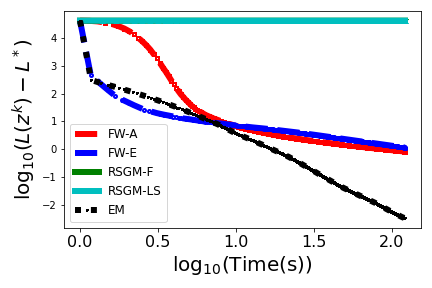}}\hfill
\subfloat[Iteration plot: $n=1000$, $m = 1000$]{\includegraphics[width=.48\linewidth]{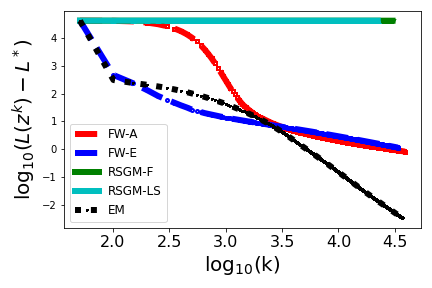}}
\caption{Comparison of optimality gaps of {\tt FW-Adapt} ({\tt FW-A}), {\tt FW-Exact} ({\tt FW-E}), {\tt RSGM-Fixed} ({\tt RSGM-F}), {\tt RSGM-LSBack} ({\tt RSGM-LS}) and {\tt EM}, with $z^0 = z^{\mathrm{bd}}$.}\label{fig:exp_res1}
\end{figure}

\begin{figure}[t!]\centering
\subfloat[Time plot: $n=1000$, $m = 1000$]{\includegraphics[width=.48\linewidth]{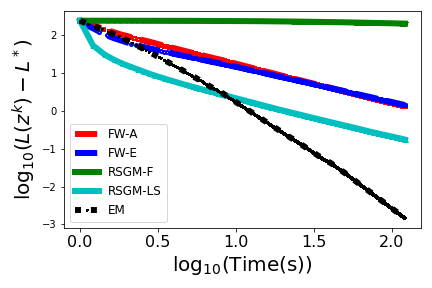}}\hfill
\subfloat[Iteration plot: $n=1000$, $m = 1000$]{\includegraphics[width=.48\linewidth]{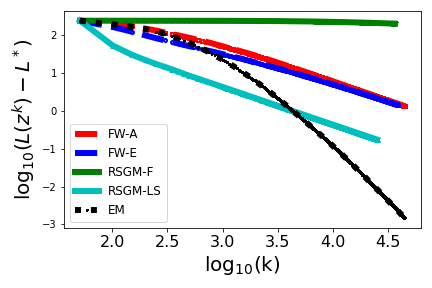}}
\caption{Comparison of optimality gaps of {\tt FW-Adapt} ({\tt FW-A}), {\tt FW-Exact} ({\tt FW-E}), {\tt RSGM-Fixed} ({\tt RSGM-F}), {\tt RSGM-LSBack} ({\tt RSGM-LS}) and {\tt EM}, with $z^0 = z^{\mathrm{ct}}$.}\label{fig:exp_res2}
\end{figure}

\appendix

\section{Complexity of computing $D_k$ in the $D$-optimal design problem}\label{app:D_k}


Let the objective function $\barf$ be defined in~\eqref{eq:Dopt2}, and denote its Hessian at $x\in\dom \barf$ as $\barH(x)$. Also denote $C:= [a_1\; \ldots\; a_m]\in\bbR^{n\times m}$ and  $X := \diag(x)$. From standard results (e.g.,~\cite[Proposition 2.2]{Lu_18}),  we know that $\nabla_i f(x) = - Q(x)_{ii}$  and $\barH(x)_{ii} = Q(x)_{ii}^2$ for $i\in[m]$, where $Q(x):= C^\top (CXC^\top)^{-1} C$ and hence $Q(x)_{ii} = a_i^\top (CXC^\top)^{-1} a_i$. 
Since the constraint set in~\eqref{eq:Dopt2} is $\Delta_m$, we can choose $v^k = e^{i_k}$, where $i_k\in\argmin_{i\in[m]} \nabla_i f(x^k)$ and $e^i$ denotes the $i$-th standard coordinate vector. Let $Q^k := Q(x^k)$.  Now, note that we can write 
\begin{align*}
D_k^2 = \lranglet{\barH (x^k)(x^k - v^k)}{x^k - v^k} &= \lranglet{\barH (x^k)x^k}{x^k} - 2\lranglet{\barH (x^k)x^k}{v^k} + \lranglet{\barH (x^k)v^k}{v^k}\\
&\eqa -\lranglet{\nabla \barf (x^k)}{x^k} + 2\lranglet{\nabla \barf (x^k)}{e^{i_k}} + \lranglet{\barH (x^k)e^{i_k}}{e^{i_k}}\\
&\eqb n + 2\nabla_{i_k} \barf (x^k) + \barH (x^k)_{i_k,i_k}\\
&= n - 2 Q^k_{i_k,i_k} + \big(Q^k_{i_k,i_k}\big)^2 \ .
\end{align*}
where in (a) we use~\ref{item:Hessian_grad} and $v^k = e^{i_k}$ and in (b) we use~\ref{item:grad_identity} and that the complexity parameter of $\barf$ is $n$. Therefore, the computational complexity of $D_k$ is the same as that of $Q^k_{i_k,i_k} = a_{i_k}^\top B^k a_{i_k}$, where $B^k := (CX^kC^\top)^{-1}$. If $k=0$, this can be computed in $O(mn^2 + n^3)$ time. 
In addition, since we have $\alpha_k < 1$ for all $k\ge 0$ (because $e^i\not\in\dom \barf$ for any $i\in[m]$), the following holds:
\begin{align*}
Q^k_{i_{k+1},i_{k+1}} &= a_{i_{k+1}}^\top \big((1-\alpha_k)CX^{k}C^\top + \alpha_k a_{i_k}a_{i_k}^\top\big)^{-1} a_{i_{k+1}}\\
&\eqa (1-\alpha_k)^{-1}a_{i_{k+1}}^\top \left(B^k - \frac{\beta_k B^k a_{i_k}a_{i_k}^\top B^k}{1+\beta_k a_{i_k}^\top B^k a_{i_k}}\right) a_{i_{k+1}}\qquad [\mbox{where }\beta_k:= \alpha_k/(1-\alpha_k)],\\
&= (1-\alpha_k)^{-1} \left(a_{i_{k+1}}^\top B^k a_{i_{k+1}} - \frac{ (a_{i_{k+1}}^\top B^k a_{i_k})^2}{\beta_k^{-1}+Q^k_{i_k,i_k}}\right),
\end{align*}
where (a) follows from the Inverse Matrix Update formula \cite{hager}.
Therefore, given $B^k$ and $Q^k_{i_k,i_k}$, $Q^k_{i_{k+1},i_{k+1}}$ can be computed in $O(n^2)$ time, for all $k\ge 0$. 

\section{Proof of Proposition~\ref{lem:omega_conj}}\label{app:proof_omega_conj}
Note that $(\omega^*)'(s) = s/(1+s)>0$ for any $s> 0$, so $\omega^*$ is strictly increasing on $[0,+\infty)$. In addition, 
$0\le s\le 1/2$ implies that  $(\omega^*)'(s)\ge \tfrac{2}{3}s$, whereby $$\omega^*(s) = \omega^*(0) + \int_{0}^s (\omega^*)'(t) dt \ge \int_{0}^s \tfrac{2}{3} t \ dt = s^2/3 \ \ \mbox{for~} s\in [0,1/2] \ . $$ 
Define the linear function $l(s):= (1-2\ln(1.5))s$, and notice that $l(s)=\omega^*(s)$ at $s=0$ and $s=1/2$.  Therefore from the convexity of $\omega^*$ it follows for $s \ge 1/2$ that
$$\omega^*(s) \ge l(s) = (1-2\ln(1.5))s \ge s/5.3 \ . $$ \qed

\section{Proof of Proposition~\ref{lem:Karmarkar}}\label{app:proof_Karmarkar}
For any $s\in(-1,1)$, we have 
\begin{align*}
\ln(1+s) &= s - \frac{s^2}{2} + \frac{s^3}{3} - \frac{s^4}{4} + \cdots\\
&\ge s - \frac{\abs{s}^2}{2} - \frac{\abs{s}^3}{2} - \frac{\abs{s}^4}{2} - \cdots\\
&= s - \frac{\abs{s}^2}{2} \left(1 + \abs{s} + \abs{s}^2 + \cdots\right)\\
&= s - \frac{\abs{s}^2}{2(1-\abs{s})} \ . 
\end{align*}\qed

\section{Proof of inequality~\eqref{eq:bound_delta_0}}\label{app:proof_bound_delta_0}
As shown in Khachiyan~\cite[Lemma~3]{khachiyan1996rounding}, with $p_0 = (1/m)e$ we have\begin{equation}\label{late}
\delta_0\le n\ln(1+\varepsilon_0)  \quad \mbox{where}\;\; \varepsilon_0:= \frac{m}{n}\,\max_{i\in[m]} a_i^\top\big(\textstyle\sum_{j=1}^m a_ja_j^\top\big)^{-1} a_i - 1 \ . 
\end{equation}
Define $C:= [a_1\; \ldots\; a_m]\in\bbR^{n\times m}$ 
and $P:= C^\top\big(CC^\top\big)^{-1}C$, and it holds for all $i\in[m]$ that:
\begin{equation}
a_i^\top\big(\textstyle\sum_{j=1}^m a_ja_j^\top\big)^{-1} a_i = (e^i)^\top C^\top\big(CC^\top\big)^{-1}Ce^i = \normt{Pe^i}_2^2  \le \normt{e^i}_2^2 = 1 \ ,
\end{equation}
where $e^i$ denotes the $i$-th standard coordinate vector, and the second equality and the inequality above follow since $P$ is a projection matrix.  As a result, $\varepsilon_0\le m/n-1$ and the proof follows using the left inequality in \eqref{late}. \qed

\section{Proof of Proposition \ref{greatrun}}\label{holdenwood}

Note that \eqref{eq:rate_d_j} follows easily from
\begin{equation}
\frac{1}{d_{j+1}   } \ge \frac{1}{d_j   } + \frac{1}{  M} \quad\forall\,j\ge 0 \ , \label{eq:recip_ineq} 
\end{equation}
and so let us prove \eqref{eq:recip_ineq}.  It follows from the hypotheses of Proposition \ref{greatrun} that:
\begin{align}
d_{j+1} \le d_j - \frac{g_j^2}{ M}\le d_j - \frac{d_j^2}{ M} \ ,\label{eq:basic_ineq}
\end{align}
and dividing both sides of~\eqref{eq:basic_ineq} by $d_jd_{j+1}$ and rearranging yields
\begin{align*}
\frac{1}{d_{j+1}} \ge \frac{1}{d_j} + \frac{d_j}{ M d_jd_{j+1}}\ge \frac{1}{d_j}\left(1+ \frac{d_j   }{ M} \right)  = \frac{1}{d_j   } + \frac{1}{ M} \ ,
\end{align*}
where the second inequality uses $d_j\ge d_{j+1}$.  

We next prove \eqref{eq:rate_g_j}. The result is trivally true for $j=0$, so let us consider $j \ge 1$. For convenience define $\barg_j:= \min\{g_0, \ldots, g_j\}$, and consider some $l$ satisfying some $0 < l\le j$. We then have
\begin{align*}
\frac{M}{l }>d_l = d_{j+1} + \sum_{i=l}^j d_i - d_{i+1} \ge d_{j+1} + \sum_{i=l}^j \frac{g_i^2}{ M} \ge (j-l+1) \frac{\barg_j^2}{ M} \ , 
\end{align*}
where the first inequality uses \eqref{eq:rate_d_j} and the second inequality is from the first hypothesis of the proposition.  
If we take $l = \ceil{j/2}$, then $j/2\le l\le j/2+1$. Therefore 
\begin{align*}
\barg_j^{2} < \frac{ M^{2}}{l (j-l+1)} \le \frac{  M^{2}}{  (j/2)^2} =  \left(\frac{2M}{j}\right)^{2} \ ,
\end{align*}
which then implies \eqref{eq:rate_g_j}. \qed

\section{Exact line-search procedure to compute $\alpha_k$}\label{app:line_search}


Let us define 
\begin{align}\barF(x):= -\textstyle\sum_{i\in[m]} y_i\ln(a_i^\top x) + \xi^\top x \ , \label{flulu}\end{align} where $y_i\in\{1,2,\ldots\}$ and $a_i\in\bbR^n$ for $i\in[m]$ and $\xi \in\bbR^n$, and we wish to solve $\min_{x\in\calX}\, \barF(x)$. Note that the optimization problems \eqref{eq:box_TV2} and \eqref{eq:PET_final} both conveniently fit into the format \eqref{flulu}. 
 Given $x\in\calX\cap\dom \barF$, let $d\in\bbR^n$ be a descent direction at $x$ (namely $\lranglet{\nabla \barF(x)}{d}<0$) such that $x+ d \in\calX$. The exact line-search problem involves finding 
\begin{equation}
\alpha^*:= {\argmin}_{\alpha\in[0,1]} \; [\zeta(\alpha):= \barF(x+\alpha d)] \ . 
\end{equation}
Define 
\begin{align}
\calI_+:=\{i\in[m]: a_i^\top d > 0\}\quad \mbox{and} \quad \calI_-:=\{i\in[m]: a_i^\top d < 0\} \ , \label{eq:I_+} 
\end{align}
so that 
\begin{align*}
\zeta(\alpha)\eqcst  -\textstyle\sum_{i\in\calI_+} y_i\ln(a_i^\top x + \alpha a_i^\top d) -\textstyle\sum_{i\in\calI_-} y_i\ln(a_i^\top x + \alpha a_i^\top d) + \alpha \xi^\top d \ ,
\end{align*}
where 
$\eqcst$ denotes equality up to a constant independent of $\alpha$. Consequently, we have 
\begin{align*}
\zeta'(\alpha) &= -\sum_{i\in\calI_+} \frac{y_ia_i^\top d}{a_i^\top x + \alpha a_i^\top d} -\sum_{i\in\calI_-} \frac{y_ia_i^\top d}{a_i^\top x + \alpha a_i^\top d} + \xi^\top d\\
&=  -\sum_{i\in\calI_+} \frac{y_i}{\gamma_i + \alpha} -\sum_{i\in\calI_-} \frac{y_i}{\gamma_i + \alpha} + \xi^\top d \ ,
\end{align*}
where $\gamma_i = a_i^\top x/a_i^\top d$  for $i\in\calI_+\cup \calI_-$. Clearly, we have $\gamma_i > 0$ for $i\in\calI_+$ and $\gamma_i < 0$ for $i\in\calI_-$. We have the following observations about $\zeta$ and $\zeta'$:
\begin{enumerate}[label = (\roman*)]
\item Domain: $\dom \zeta=\dom \zeta'= (\gamma_-,\gamma_+)$, where $\gamma_-:= -\min_{i\in\calI_+}\gamma_i<0$ and $\gamma_+:= -\max_{i\in\calI_-}\gamma_i>0$. 
\item Monotonicity: Since $\zeta$ is strictly convex, $\zeta'$ is strictly increasing on $(\gamma_-,\gamma_+)$. Also, $\zeta'(0) =   \lranglet{\nabla \barF(x)}{d} <0$. 
\item \label{item:behavior_alpha_+infty} Behavior as $\alpha\uparrow +\infty$: If $\calI_-\ne \emptyset$, then $\gamma_+ < +\infty$ and $\zeta'(\alpha)\uparrow +\infty$ as $\alpha\uparrow \gamma_+$; if $\calI_- =  \emptyset$, then $\gamma_+ = +\infty$ and  $\zeta'(\alpha)\uparrow \xi^\top d$ as $\alpha\uparrow +\infty$. 
\end{enumerate} 
Based on these observations, our exact line search procedure can be described as follows:
\begin{itemize}
\item $\calI_-=\emptyset$: We consider two cases, namely $\xi^\top d\le 0$ and $\xi^\top d> 0$. If $\xi^\top d\le 0$, then $\zeta'(\alpha)<0$ for all $\alpha\ge 0$, and hence $\alpha^* = 1$. Otherwise, we have $\xi^\top d> 0$, and hence let us first find $\alpha'$ as the unique solution of $\zeta'(\alpha) = 0$, namely $\alpha' = (\zeta')^{-1}(0)$. If $\alpha'>1$, then $\zeta'(\alpha)<0$ for all $\alpha\in [0,1]$ and hence $\alpha^*=1$; otherwise $\alpha^* = \alpha'$. In other words, $\alpha^* = \min\{\alpha',1\}$. 
\item  $\calI_-\ne \emptyset$: In this case, by observation~\ref{item:behavior_alpha_+infty}, we see that $\zeta'(\alpha) = 0$ must have a unique solution $\alpha'$ on $(0,\gamma_+)$. To find such a solution, we can first find $j$ as the largest integer such that $\zeta(\gamma_+ -10^{j})>0$, and then use Newton's method starting from $\gamma_+ -10^{j}$ or bisection over $[0,\gamma_+ -10^{j}]$ to find $\alpha'$. Once we find $\alpha'$, we can let $\alpha^* = \min\{\alpha',1\}$. 
\end{itemize}




%

\bibliographystyle{spmpsci}      

\bibliography{math_opt,stat_ref,mach_learn,GF-papers-nips-better}

\end{document}